\title{Stacked polytopes and tight triangulations of manifolds}
\date{\today}
\author{Felix~Effenberger\footnote{Institut f\"ur Geometrie und Topologie, Universit\"at Stuttgart, 70550 Stuttgart, Germany, \texttt{effenberger@mathematik.uni-stuttgart.de}.}}
\newcommand{\squishlist}{
 \begin{list}{$\bullet$}
  { \setlength{\itemsep}{0pt}
     \setlength{\parsep}{3pt}
     \setlength{\topsep}{3pt}
     \setlength{\partopsep}{0pt}
     \setlength{\leftmargin}{1.5em}
     \setlength{\labelwidth}{1em}
     \setlength{\labelsep}{0.5em} } }
\newcommand{\squishlisttwo}{
 \begin{list}{$\bullet$}
  { \setlength{\itemsep}{0pt}
     \setlength{\parsep}{0pt}
    \setlength{\topsep}{0pt}
    \setlength{\partopsep}{0pt}
    \setlength{\leftmargin}{2em}
    \setlength{\labelwidth}{1.5em}
    \setlength{\labelsep}{0.5em} } }
\newcommand{\squishend}{
  \end{list}  }
\numberwithin{equation}{section}
\theoremstyle{plain}
\newtheorem{definition}{Definition}
\newtheorem{lemma}[definition]{Lemma}
\newtheorem{theorem}[definition]{Theorem}
\newtheorem{corollary}[definition]{Corollary}
\newtheorem{conjecture}[definition]{Conjecture}
\newtheorem{question}[definition]{Question}
\newtheorem{remark}[definition]{Remark}
\theoremstyle{nonumberplain}
\newtheorem{proof}{Proof}
\numberwithin{definition}{section}
\newcommand{\iso}{\ensuremath{\cong}}
\renewcommand{\epsilon}{\varepsilon}
\newcommand{\ra}{\ensuremath{~\rightarrow~}}
\newcommand{\set}[1]{\ensuremath{\{#1\}}}
\newcommand{\N}[1][]{\mathbb{N}^{#1}}
\newcommand{\R}[1][]{\mathbb{R}^{#1}}
\newcommand{\C}[1][]{\mathbb{C}^{#1}}
\DeclareMathOperator*{\skel}{skel}
\newcommand{\lk}{\operatorname{lk}}
\newcommand{\str}{\operatorname{st}}
\newcommand{\Span}{\operatorname{span}}
\newcommand*{\dtimes}{\mathrel{\vcenter{\offinterlineskip\hbox{$\times$}\vskip-.55ex\hbox{$\hspace*{.3ex}\underline{\hspace*{1.2ex}}$}}}}
\begin{document}
\maketitle
\vspace*{-6cm}
\begin{flushright}
\it
to appear in Journal of Combinatorial Theory, Series A
\end{flushright}
\vspace*{5cm}

\begin{abstract}
	Tightness of a triangulated manifold is a topological condition, roughly meaning that any simplex-wise linear embedding of the triangulation into Euclidean space is ``as convex as possible''. It can thus be understood as a generalization of the concept of convexity. In even dimensions, su\-per-\-neigh\-bor\-li\-ness is known to be a purely combinatorial condition which implies the tightness of a triangulation.
		Here, we present other sufficient and purely combinatorial conditions which can be applied to the odd-dimensional case as well. One of the conditions is that all vertex links are stacked spheres, which implies that the triangulation is in Walkup's class $\mathcal{K}(d)$.
		We show that in any dimension $d\geq 4$, \emph{tight-neighborly} triangulations as defined by Lutz, Sulanke and Swartz 	are tight.
		Furthermore, triangulations with $k$-stacked vertex links and the centrally symmetric case are discussed.
	
	\medskip
	\noindent
	\textbf{Keywords}: triangulated manifold, stacked polytope, tight, perfect Morse function\\ 
	\textbf{MSC}: primary 52B05, secondary 52B70, 53C42, 52B70, 57Q35
	
\end{abstract}

\section{Introduction and results}
\label{sec:intro}

	Tightness is a notion developed in the field of differential geometry as the equality of the (normalized) \emph{total absolute curvature} of a submanifold with the lower bound \emph{sum of the Betti numbers} \cite{Kuiper84GeomTotAbsCurvTheo, Banchoff97TightSubmSmoothPoly}. It was first studied by Alexandrov \cite{Alexandrov38ClassClosedSurf}, Milnor \cite{Milnor50RelBettiHypersurfIntGaussCurv}, Chern and Lashof \cite{Chern57TotCurvImmMnf} and Kuiper \cite{Kuiper59ImmMinTotAbsCurv} and later extended to the polyhedral case by Banchoff \cite{Banchoff65TightEmb3DimPolyMnf}, Kuiper \cite{Kuiper84GeomTotAbsCurvTheo} and Kühnel \cite{Kuehnel95TightPolySubm}.
	
	From a geometrical point of view, tightness can be understood as a generalization of the concept of convexity that applies to objects other than topological balls and their boundary manifolds since it roughly means that an embedding of a submanifold is ``as convex as possible'' according to its topology. The usual definition is the following.

	\begin{definition}[tightness \cite{Kuiper84GeomTotAbsCurvTheo, Kuehnel95TightPolySubm}]
		Let $\mathbb{F}$ be a field. An embedding $M \rightarrow \mathbb{E}^N$ of a compact manifold is called \emph{$k$-tight with respect to $\mathbb{F}$} if for any open or closed half-space  $h\subset E^N$ the induced homomorphism
		
		$$H_i(M\cap h;\mathbb{F})\longrightarrow H_i(M;\mathbb{F})$$
		
		\noindent is injective for all $i\leq k$. $M$ is called \emph{$\mathbb{F}$-tight} if it is $k$-tight for all $k$. The standard choice for the field of coefficients is $\mathbb{F}_2$ and an $\mathbb{F}_2$-tight embedding is called \emph{tight}.
		\label{def:tighthom}
	\end{definition}

	With regard to PL embeddings of PL manifolds, the 	tightness of \emph{combinatorial manifolds} can also be defined via a purely combinatorial condition as follows. For an introduction to PL topology see 	\cite{Rourke72IntrPLTop}, for more recent developments in the field see \cite{Lutz05TrigMnfFewVertCombMnf, Datta07MinTrigManifolds}.
	
	\begin{definition}[combinatorial manifold, combinatorial tightness \cite{Kuehnel95TightPolySubm}]\hfill
		\begin{enumerate}[(i)]
			\item A simplicial complex $K$ that has a topological manifold as its underlying set $|K|$ is called \emph{triangulated manifold}. $K$ is called \emph{combinatorial manifold} of dimension $d$ if all vertex links of $K$ are PL $(d-1)$-spheres, where a PL $(d-1)$-sphere is a triangulation of the $(d-1)$-sphere that carries a standard PL structure.
			
			\item Let $\mathbb{F}$ be a field. A combinatorial manifold $K$ on $n$ vertices is called \emph{($k$-)tight w.r.t. $\mathbb{F}$} if its canonical embedding $$K\subset \Delta^{n-1}\subset E^{n-1}$$ is ($k$-)tight w.r.t. $\mathbb{F}$, where $\Delta^{n-1}$ denotes the $(n-1)$-dimensional simplex.
		\end{enumerate}
		\label{def:tighthomcomb}
	\end{definition}
	
	In dimension $d=2$ the following are equivalent for a triangulated surface $S$ on $n$ vertices: (i) $S$ has a complete edge graph $K_n$, (ii) $S$ appears as a so called \emph{regular case} in Heawood's Map Color Theorem \cite{Heawood90MapColThm, Ringel74MapColThm}, compare \cite[Chap.~2C]{Kuehnel95TightPolySubm} and (iii) the induced piecewise linear embedding of $S$ into Euclidean $(n-1)$-space has the two-piece property \cite{Banchoff74TightPolyKleinBottProjPlMoeb}, and it is tight  \cite{Kuehnel80Tight0TightPolyhEmbSurf}, \cite[Chap.~2D]{Kuehnel95TightPolySubm}.

	K\"uhnel investigated the tightness of combinatorial triangulations of manifolds also in higher dimensions and codimensions, see \cite{Kuehnel94ManSkelConvPolyt}, \cite[Chap.~4]{Kuehnel95TightPolySubm}. It turned out that the tightness of a combinatorial triangulation is closely related to the concept of \emph{Hamiltonicity} of a polyhedral complexes (see \cite{Kuehnel91HamSurfPoly, Kuehnel95TightPolySubm}): A subcomplex $A$ of a polyhedral complex $K$ is called \emph{$k$-Hamiltonian}\footnote{This is not to be confused with the notion of a $k$-Hamiltonian graph, see  \cite{Chartrand69CubeConnGraph1Ham}.} if $A$ contains the full $k$-dimensional skeleton of $K$. This generalization of the notion of a Hamiltonian circuit in a graph seems to be due to Schulz \cite{Schulz74DissMgfZell, Schulz94PolyhMnfPoly}. A Hamiltonian circuit then becomes a special case of a $0$-Hamiltonian subcomplex of a $1$-dimensional graph or of a higher-dimensional complex \cite{Ewald73HamCircSimpComp}.
	
	A triangulated $2k$-manifold that is a $k$-Hamiltonian subcomplex of the boundary complex of some higher dimensional simplex is a tight triangulation as Kühnel \cite[Chap.~4]{Kuehnel95TightPolySubm} showed. Such a triangulation is also called \emph{$(k+1)$-neighborly triangulation} since any $k+1$ vertices in a $k$-dimensional simplex are common neighbors. Moreover, $(k+1)$-neighborly triangulations of $2k$-manifolds are also referred to as \emph{super-neighborly} triangulations --- in analogy with neighborly polytopes the boundary complex of a $(2k+1)$-polytope can be at most $k$-neighborly unless it is a simplex. Notice here that combinatorial $2k$-manifolds can go beyond $k$-neighborliness, depending on their topology. 
	
	With the simplex as ambient polytope there exist generalized Heawood inequalities in even dimensions $d\geq 4$ that were first conjectured by Kühnel \cite{Kuehnel94ManSkelConvPolyt, Kuehnel95TightPolySubm}, almost completely proved in \cite{Novik98UBTHomMnf} by Novik and proved by Novik and Swartz in \cite{Novik08SocBuchsMod}. As in the $2$-dimensional case, the $k$-Hamiltonian triangulations of $2k$-manifolds here appear as regular cases of the generalized Heawood inequalities.
	
	There also exist generalized Heawood inequalities for $k$-Hamiltonian subcomplexes of cross polytopes that were first conjectured by Sparla \cite{Sparla99LBTComb2kMnf} and almost completely proved by Novik in \cite{Novik05OnFNumMnfSymm}. The subcomplexes appearing as regular cases in these inequalities admit a tight embedding into a higher dimensional cross polytope and are also referred to as \emph{nearly $(k+1)$-neighborly} as they contain all $i$-simplices, $i\leq k$, not containing one of the diagonals of the cross polytope (i.e. they are ``neighborly except for the diagonals of the cross polytope'').
	
	For $d=2$, a regular case of Heawood's inequality corresponds to a  triangulation of an abstract surface (cf. \cite{Ringel74MapColThm}). Ringel \cite{Ringel55WieManGeschlNichtoFl} and Jungerman and Ringel \cite{Jungerman80MinTrigOrientSurf} showed that all of the infinitely many regular cases of Heawood's inequality distinct from the Klein bottle do occur. As any such case yields a tight triangulation (see \cite{Kuehnel80Tight0TightPolyhEmbSurf}), there are infinitely many tight triangulations of surfaces. 
	
	In contrast, in dimensions $d\geq 3$ there only exist a finite number of known examples of tight triangulations (see \cite{Kuehnel99CensusTight} for a census), apart from the trivial case of the boundary of a simplex and an infinite series of triangulations of sphere bundles over the circle due to K\"uhnel  \cite[5B]{Kuehnel95TightPolySubm}, \cite{Kuehnel86HigherDimCsaszar}.

	Especially in odd dimensions it seems to be hard to give combinatorial conditions for the tightness of a triangulation and such conditions were not known so far. This work presents one such condition holding in any dimension $d\geq 4$.
	
	\bigskip
	
	\noindent In the course of proving the Lower Bound Conjecture (LBC) for $3$- and $4$-manifolds, D.~Walkup \cite{Walkup70LBC34Mnf} defined a class  $\mathcal{K}(d)$ of ``certain especially simple'' \cite[p.~1]{Walkup70LBC34Mnf} combinatorial manifolds as the set of all combinatorial $d$-manifolds that only have \emph{stacked} $(d-1)$-spheres as vertex links as defined below.

	\begin{definition}[stacked polytope, stacked sphere \cite{Walkup70LBC34Mnf}]\hfill
		\begin{enumerate}[(i)] 
									\item A simplex is a \emph{stacked} polytope and each polytope obtained from a stacked polytope by adding a pyramid over one of its facets is again stacked.
			
			\item A triangulation of the $d$-sphere $S^d$ is called \emph{stacked $d$-sphere} if it is combinatorially isomorphic to the boundary complex of a stacked $(d+1)$-polytope. 
			
					\end{enumerate}
	\end{definition}	
	
	Thus, a stacked $d$-sphere can be understood as the combinatorial manifold obtained from the boundary of the $(d+1)$-simplex by successive stellar subdivisions of facets of the boundary complex $\partial \Delta^{d+1}$ of the $(d+1)$-simplex (i.e. by successively subdividing facets of a complex $K_i$, $i=0,1,2,\dots$, by inner vertices, where  $K_0=\partial \Delta^{d+1}$).
	In this work we will give combinatorial conditions for the tightness of  members of $\mathcal{K}(d)$ holding in all dimensions $d\geq 4$. The main results of this paper are the following:
	
	In Theorem~\ref{thm:MorseTight} we show that any polar Morse function subject to a condition on the number of critical points of even and odd indices is a perfect function. This can be understood as a combinatorial analogon to Morse's lacunary principle, see Remark~\ref{rem:lacunaryprinciple}.
	
	This result is used in Theorem~\ref{thm:Kd2NeighborlyTight} in which it is shown that every $2$-neighborly member of $\mathcal{K}(d)$ is a tight triangulation for $d\geq 4$. Thus, all \emph{tight-neighborly} triangulations as defined in \cite{Lutz08FVec3Mnf} are tight for $d\geq 4$ (see Section~\ref{sec:TightNeigh}). 
	
	\noindent The paper is organized as follows.
	
	Section~\ref{sec:PolarMorseTight} begins with a short introduction to polyhedral Morse theory giving rise to a tightness definition of a triangulation in terms of (polyhedral) Morse theory, followed by the investigation on a certain family of perfect Morse functions. The latter functions can be used to give a combinatorial condition for the tightness of odd-dimensional combinatorial manifolds in terms of properties of the vertex links of such manifolds.
	
	In Section~\ref{sec:TightnessKd}, the tightness of members of $\mathcal{K}(d)$ is discussed, followed by a discussion of the tightness of tight-neighborly triangulations for $d\geq 4$ in Section~\ref{sec:TightNeigh}. Both sections include examples of triangulations for which the stated theorems hold.
	
	In Section~\ref{sec:Kkd}, the classes $\mathcal{K}^k(d)$ of combinatorial manifolds are introduced as a generalization of Walkup's class $\mathcal{K}(d)$ and examples of manifolds in these classes are presented. 	Furthermore, an analogue of Walkup's theorem \cite[Thm.~5]{Walkup70LBC34Mnf},  \cite[Prop.~7.2]{Kuehnel95TightPolySubm} for $d=6$ is proved, assuming the validity of the Generalized Lower Bound Conjecture \ref{conj:GLBC}.  Finally, Section~\ref{sec:SubcomplCross} focuses on subcomplexes of cross polytopes that lie in the class $\mathcal{K}^k(d)$ for some $k$. Here, an example of a centrally symmetric triangulation of $S^4\times S^2\in \mathcal{K}^2(6)$ as a $2$-Hamiltonian subcomplex of the $8$-dimensional cross polytope is given. This triangulation is part of a conjectured series of triangulations of sphere products as tight subcomplexes of cross polytopes.

\section{Polar Morse functions and tightness}
\label{sec:PolarMorseTight}
		
	Apart from the homological definition given in Definition~\ref{def:tighthom} and \ref{def:tighthomcomb}, tightness can also be defined in the language of Morse theory in a natural way:	On one hand, the total absolute curvature of a smooth immersion $X$ equals the average number of critical points of any non-degenerate height function on $X$ in a suitable normalization. On the other hand, the Morse inequality shows that the normalized total absolute curvature of a compact smooth manifold $M$ is bounded below by the rank of the total homology $H_{*}(M)$ with respect to any field of coefficients, where tightness is equivalent to the case of equality in this bound, see \cite{Kuehnel99CensusTight}.
	
	As an extension to classical Morse theory (see \cite{Milnor63MorseTheory} for an introduction to the field), K\"uhnel \cite{Kuehnel90TrigMnfFewVert, Kuehnel95TightPolySubm} developed what one might refer to as a ``polyhedral Morse theory''. Note that in this theory many, but not all concepts carry over from the smooth to the polyhedral case, see the survey articles \cite{Kuiper84GeomTotAbsCurvTheo} and \cite{Banchoff97TightSubmSmoothPoly} for a comparison of the two cases.
 
	A discrete analogon to the Morse functions in classical Morse theory, are defined in the polyhedral case as follows.
	
	\begin{definition}[rsl functions, \cite{Kuehnel90TrigMnfFewVert, Kuehnel95TightPolySubm}]
		Let $M$ be a combinatorial manifold of dimension $d$. A function $f: M\ra \R$ is called \emph{regular simplex-wise linear} (\emph{rsl}, for short), if $f(v)\neq f(v')$ for any two vertices $v\neq v'$ of $M$ and $f$ is linear when restricted to any simplex of $M$. Regular simplex-wise linear functions are sometimes also referred to as \emph{Morse functions}. 
	\end{definition}
	
	Notice that an rsl function is uniquely determined by its value on the set of vertices and that only vertices can be critical points of $f$ in the sense of Morse theory. With this definition at hand one can define critical points and levelsets of these Morse functions as in classical Morse theory. 
	
	\begin{definition}[critical vertices, \cite{Kuehnel90TrigMnfFewVert, Kuehnel95TightPolySubm}]
		Let $\mathbb{F}$ be a field, $M$ be a combinatorial $d$-manifold and let $f$ be an rsl function on $M$. A vertex $v\in M$ is called \emph{critical of index $k$ and multiplicity $m$ with respect to $f$}, if
				\begin{equation*}
			\dim_{ \mathbb{F}} H_k(M_v,M_v\backslash\set{v};\mathbb{F})=m>0,
		\end{equation*}
				\noindent where $M_v:=\set{x\in M:f(x)\leq f(v)}$ and $H_{*}$ denotes an appropriate homology theory with coefficients in $\mathbb{F}$. The \emph{number of critical points of $f$ of index $i$} (with multiplicity) are
		
		\begin{equation*}
			\mu_i(f; \mathbb{F}):=\sum_{v\in V(M)} \dim_{\mathbb{F}} H_i(M_v,M_v\backslash\set{v};\mathbb{F}).
		\end{equation*}
	\end{definition}
	
	In the following we will be interested in special kinds of Morse functions, so called \emph{polar} Morse functions. This term was coined by Morse, see \cite{Morse60ExistPolNonDegenFuncDiffMnf}. 

	\begin{definition}[polar Morse function]
		Let $f$ be a Morse function that only has one critical point of index $0$ and of index $d$ each for a given (necessarily connected) $d$-manifold. Then $f$ is called \emph{polar Morse function}. 
	\end{definition}
	
	Note that for a $2$-neighborly combinatorial manifold clearly all rsl functions are polar. As in the classical theory, there hold Morse relations as follows.
	
	\begin{theorem}[Morse relations, \cite{Kuehnel90TrigMnfFewVert, Kuehnel95TightPolySubm}]
		Let $\mathbb{F}$ be a field, $M$ a combinatorial manifold of dimension $d$ and $f$ an rsl function on $M$. Then the following holds, where $\beta_i(M; \mathbb{F}):=\dim_{ \mathbb{F}} H_i(M; \mathbb{F})$ denotes the $i$-th Betti number:
		
		\begin{enumerate}[(i)]
			\item $\mu_i(f; \mathbb{F})\geq \beta_i(M; \mathbb{F})$ for all $i$,
			\item $\sum_{i=0}^{d} (-1)^{i} \mu_i(f; \mathbb{F})=\chi(M)=\sum_{i=0}^{d} (-1)^{i} \beta_i(M; \mathbb{F})$,
			\item $M$ is ($k$-)tight with respect to $\mathbb{F}$ if and only if $\mu_i(f; \mathbb{F})=\beta_i(M; \mathbb{F})$ for every rsl function $f$ and for all $0\leq i \leq d$ (for all $0\leq i\leq k$).
		\end{enumerate}

		Functions satisfying equality in (i) for all $i \leq k$ are called \emph{$k$-tight functions}. A function $f$ that satisfies equality in (i) for all $i$ is usually referred  to as \emph{perfect} or \emph{tight} function, cf. \cite{Bott80MorseTheoYangMills}.   
		\label{thm:MorseRelations}
	\end{theorem}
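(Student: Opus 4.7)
The plan is to follow the classical proof of the Morse inequalities, transferred to the polyhedral setting. Order the vertices $v_1,\dots,v_n$ of $M$ so that $f(v_1)<\dots<f(v_n)$, and let $M_j := M_{v_j}$ denote the corresponding sublevel subcomplex, with $M_0:=\emptyset$. First I would verify the geometric step that $M_j\setminus\{v_j\}$ strong deformation retracts onto $M_{j-1}$: the open star of $v_j$ in $M_j$ collapses radially from $v_j$ onto its ``descending link'' (the subcomplex of $\lk(v_j)$ spanned by those vertices with smaller $f$-value), which already lives in $M_{j-1}$. Excision then yields
\[
  H_k(M_j, M_{j-1};\mathbb{F}) \cong H_k(M_j, M_j\setminus\{v_j\};\mathbb{F}),
\]
so the total $\mathbb{F}$-dimension of the left-hand side, summed over $j$, equals $\mu_i(f;\mathbb{F})$.

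For (i), I would run the long exact sequence of each pair $(M_j,M_{j-1})$ and read off the standard rank inequality $\dim H_i(M_j) \leq \dim H_i(M_{j-1}) + \dim H_i(M_j,M_{j-1})$; telescoping from $j=1$ to $n$ and using $M_n=M$ yields $\beta_i(M;\mathbb{F})\leq\mu_i(f;\mathbb{F})$. Part (ii) follows from the additivity of Euler characteristic on pairs: the alternating sum $\sum_i(-1)^i\mu_i(f;\mathbb{F})$ is the telescope of $\chi(M_j)-\chi(M_{j-1})$ and hence equals $\chi(M)=\sum_i(-1)^i\beta_i(M;\mathbb{F})$, the last equality being the Euler--Poincar\'e formula over a field.

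For (iii), the key observation is that the subcomplexes $M_{v_j}$ appearing above are, up to homotopy equivalence, precisely the intersections $M\cap h$ from Definition~\ref{def:tighthom}: under the canonical embedding $M\subset\Delta^{n-1}\subset E^{n-1}$, every closed half-space $h$ cuts out a sublevel set of an affine functional on $\Delta^{n-1}$, and a generic perturbation turns this functional into an rsl function with the same sublevel filtration. Chasing the long exact sequences of the pairs $(M_j,M_{j-1})$ then shows that injectivity of $H_i(M_{v_j};\mathbb{F})\to H_i(M;\mathbb{F})$ for every rsl $f$ and every $j$ is equivalent to the equality $\mu_i(f;\mathbb{F})=\beta_i(M;\mathbb{F})$ for every such $f$; restricting to $i\leq k$ gives the $k$-tight version. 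The hard part will be the descending-link deformation retract established in the first step, since unlike the smooth case there is no gradient flow to invoke and the argument must be carried out simplicially; once that lemma and the half-space/sublevel correspondence are in hand, the rest reduces to the standard algebraic bookkeeping familiar from classical Morse theory.
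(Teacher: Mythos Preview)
The paper does not supply a proof of this theorem at all: it is stated as a known result with attribution to \cite{Kuehnel90TrigMnfFewVert, Kuehnel95TightPolySubm}, and the text moves on immediately afterwards. So there is no ``paper's own proof'' to compare against.

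That said, your sketch is the standard argument and is essentially sound. The filtration by the sublevel complexes $M_j$, the identification $H_*(M_j,M_{j-1};\mathbb{F})\cong H_*(M_j,M_j\setminus\{v_j\};\mathbb{F})$ via the radial retraction of the open star onto the descending link, and the telescoping of the long exact sequences of the pairs $(M_j,M_{j-1})$ are exactly how parts (i) and (ii) are obtained in K\"uhnel's treatment. Your description of (iii) --- that half-space sections of the canonical embedding in $\Delta^{n-1}$ are, up to homotopy, sublevel sets of rsl functions, and that injectivity of $H_i(M_j)\to H_i(M)$ for all $j$ and all rsl $f$ is equivalent to $\mu_i(f)=\beta_i$ for all such $f$ --- is also the intended mechanism. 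One small point worth tightening: in the telescoping step for (i), individual differences $\dim H_i(M_j)-\dim H_i(M_{j-1})$ may be negative, so it is the inequality $\dim H_i(M_j)\le \dim H_i(M_{j-1})+\dim H_i(M_j,M_{j-1})$ summed over $j$ that yields $\beta_i\le\mu_i$, not a termwise comparison; you have this implicitly, but it is worth stating. You are also right that the simplicial deformation retract is the only genuinely geometric ingredient; it follows from the cone structure of $\operatorname{st}(v_j)\cap M_j$ over the descending link and needs no smooth flow.
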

	
	Note that a submanifold $M$ of $E^d$ is tight in the sense of Definition~\ref{def:tighthom} if and only if every Morse function on $M$ is a tight function, see \cite{Kuehnel90TrigMnfFewVert, Kuehnel95TightPolySubm}.
	
	As already mentioned in Section~\ref{sec:intro}, there exist quite a few examples of triangulations in even dimensions that are known to be tight, whereas ``for odd-dimensional manifolds it seems to be difficult to transform the tightness of a polyhedral embedding into a simple combinatorial condition'', as K\"uhnel \cite[Chap.~5]{Kuehnel95TightPolySubm} observed. Consequently, there are few examples of triangulations of odd-dimensional manifolds that are known to be tight apart from the sporadic triangulations in \cite{Kuehnel99CensusTight} and Kühnel's infinite series of $S^{d-1}\dtimes S^1$ for odd $d\geq 3$.
	
	It is a well known fact, that in even dimensions a Morse function which only has critical points of even indices is a tight function, cf. \cite{Bott80MorseTheoYangMills}. This follows directly from the Morse relations, i.e. the fact that $\sum_i (-1)^i\mu_i=\chi(M)$ holds for any Morse function on a manifold $M$ and the fact that $\mu_i\geq \beta_i$. In odd dimensions on the other hand, argumenting in this way is impossible as we always have $\mu_0\geq 1$ and the alternating sum allows the critical points to cancel out each other. 	
	What will be shown in Theorem~\ref{thm:MorseTight} is that at least for a certain family of Morse functions the tightness of its members can readily be determined in arbitrary dimensions $d\geq3$.
	
	\begin{theorem}
		Let $\mathbb{F}$ be any field, $d\geq 3$ and $f$ a polar Morse function on a combinatorial $\mathbb{F}$-orientable $d$-manifold $M$ such that the number of critical points of $f$ (counted with multiplicity) satisfies
				\begin{equation*}
			\mu_{d-i}(f; \mathbb{F})=\mu_i(f; \mathbb{F})=\left\{ 
			\begin{array}{ll}
			0&\text{for even $2\leq i\leq \lfloor\frac{d}{2}\rfloor$}\\ 
			k_i&\text{for odd $1\leq i\leq \lfloor\frac{d}{2}\rfloor$}\\ 
			\end{array}
			\right.,
		\end{equation*}
				where $k_i\geq 0$ for arbitrary $d$ and moreover $k_{\lfloor d/2\rfloor}=k_{\lceil d/2 \rceil}=0$, if $d$ is odd. Then $f$ is a tight function.
		\label{thm:MorseTight}
	\end{theorem}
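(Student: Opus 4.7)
The plan is a pairing argument combining the strong Morse inequalities (a standard refinement of Theorem~\ref{thm:MorseRelations}(i), valid in the polyhedral Morse theory of K\"uhnel) with the Euler--Poincar\'e identity. For $0\le k\le d$ I set
$$Q_k:=\sum_{i=0}^{k}(-1)^{k-i}\bigl(\mu_i(f;\mathbb{F})-\beta_i(M;\mathbb{F})\bigr).$$
The strong Morse inequalities give $Q_k\ge 0$ for every $k$, Theorem~\ref{thm:MorseRelations}(ii) gives $Q_d=0$, and a direct telescoping yields $Q_k+Q_{k-1}=\mu_k-\beta_k$, with the right-hand side non-negative by Theorem~\ref{thm:MorseRelations}(i). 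The decisive observation is: whenever $\mu_k=0$ the identity reads $Q_k+Q_{k-1}=-\beta_k\le 0$, and combined with $Q_k,Q_{k-1}\ge 0$ this forces $\beta_k=0$ and $Q_k=Q_{k-1}=0$. My goal is to show $Q_k=0$ for every $k$, since then $\mu_k=\beta_k$ for every $k$ and Theorem~\ref{thm:MorseRelations}(iii) closes the argument.

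Two anchors come for free. Polarity of $f$ together with connectedness of $M$ gives $\mu_0=\beta_0=1$, so $Q_0=0$; $\mathbb{F}$-orientability gives $\mu_d=\beta_d=1$, so the telescoping at $k=d$ combined with $Q_d=0$ yields $Q_{d-1}=0$. From these I would propagate the vanishing of $Q_k$ inward, using the hypothesis to produce many indices with $\mu_k=0$. For $d=2m$ even, the hypothesis directly gives $\mu_k=0$ for every even $k\in[2,m]$, and the imposed symmetry $\mu_{d-i}=\mu_i$ extends this to every even $k\in[m,d-2]$; applying the decisive observation at each such~$k$ produces pairs $(k-1,k)$ whose union covers the whole of $[1,d-2]$, so every $Q_k$ vanishes.

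For $d=2m+1$ odd, the same enumeration yields $\mu_k=0$ at every even $k\in[2,m]$ and every odd $k\in[m+1,d-2]$, while the extra hypothesis $k_{\lfloor d/2\rfloor}=k_{\lceil d/2\rceil}=0$ kills $\mu_m$ and $\mu_{m+1}$ as well. These two consecutive middle zeros form the essential ``bridge'' between the left and right halves of the index range; a short case check, splitting on the parity of $m$, verifies that the resulting pairs $(k-1,k)$, together with the anchors $Q_0=0$ and $Q_{d-1}=0$, cover every index in $\{0,1,\dots,d\}$ and hence force $Q_k=0$ throughout. In both parities of $d$, the telescoping then gives $\mu_k=\beta_k$ for all $k$, which is the tightness of $f$.

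I expect the only point of delicacy to be this final enumeration in the odd-dimensional case: one must verify that the potentially non-vanishing $\mu_k$'s on either side of the middle are always followed sufficiently quickly by a $\mu=0$ index, so that the chain of implications $Q_{k-1}=0\Rightarrow Q_k=0$ never stalls before meeting the backward chain from $Q_{d-1}$. Everything else---the strong Morse inequalities, the Euler--Poincar\'e identity, and the consequences of $\mathbb{F}$-orientability---is standard in this setting.
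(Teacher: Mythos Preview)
Your argument is correct and takes a genuinely different route from the paper's own proof. The paper works \emph{locally}: it fixes a vertex $v$, uses the hypothesis $\mu_i=0$ at the relevant indices to conclude $H_i(M_v,M_v\setminus\{v\})=0$, and then reads off directly from the long exact sequence of the pair that the inclusion $H_{i-1}(M_v\setminus\{v\})\to H_{i-1}(M_v)$ is injective; a separate small argument at index $d-1$ uses polarity at the unique maximal vertex. By contrast you work \emph{globally}, never touching an individual vertex: you invoke the strong Morse inequalities $Q_k\ge 0$ (which are indeed available in K\"uhnel's polyhedral setting, though the paper only records the weak form), combine them with $Q_d=0$ and the telescoping identity $Q_k+Q_{k-1}=\mu_k-\beta_k$, and observe that each index with $\mu_k=0$ forces the adjacent $Q$'s to vanish. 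The paper's approach is more self-contained (it does not import the strong inequalities as a black box) and makes the injectivity at each sublevel explicit; your approach is cleaner and more transparently exhibits the theorem as an instance of the lacunary principle. Your odd-dimensional enumeration is correct---the extra hypothesis $\mu_m=\mu_{m+1}=0$ is exactly what bridges the two halves, and the parity split on $m$ goes through as you describe. One cosmetic point: the final appeal should be to the definition of a perfect (tight) function following Theorem~\ref{thm:MorseRelations}, not to part~(iii) itself, which concerns tightness of $M$ rather than of the single function $f$.
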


	\begin{proof}
		Note that as $f$ is polar, $M$ necessarily is connected and orientable. If $d=3$, $\mu_0=\mu_3=1$ and $\mu_1=\mu_2=0$, and the statement follows immediately. Thus, let us only consider the case $d\geq 4$ from now on. 
		Assume that the vertices $v_1,\dots,v_n$ of $M$ are ordered by their $f$-values, $f(v_1)< f(v_2)<\dots<f(v_n)$. In the long exact sequence for the relative homology
				\begin{equation}
			\begin{array}{l}
			\ldots \ra H_{i+1}(M_v,M_v\backslash\set{v}) \ra H_{i}(M_v\backslash\set{v}) \overset{\iota_{i}^{*}}{\ra} H_{i}(M_v) \ra \\  \ra H_{i}(M_v,M_v\backslash\set{v}) \ra H_{i-1}(M_v\backslash\set{v}) \ra \ldots
			\end{array}
			\label{eq:rellongexactpolar}
		\end{equation}
	the tightness of $f$ is equivalent to the injectivity of the inclusion map $\iota_{i}^{*}$ for all $i$ and all $v\in V(M)$. The injectivity of $\iota_{i}^{*}$ means that for any fixed $j=1,\dots,n$, the homology $H_i(M_{v_j},M_{v_{j-1}})$ (where $M_{v_{0}}=\emptyset$) persists up to the maximal level $H_i(M_{v_n})=H_i(M)$ and is mapped injectively from level $v_j$ to level $v_{j+1}$. This obviously is equivalent to the condition for tightness given in Definition~\ref{def:tighthom}. Thus, tight triangulations can also be interpreted as triangulations with the maximal persistence of the homology in all dimensions with respect to the vertex ordering induced by $f$ (see \cite{Edelsbrunner08PersistHomSurv}). Hence, showing the tightness of $f$ is equivalent to proving the injectivity of $\iota_{i}^{*}$ at all vertices $v\in V(M)$ and for all $i$, what will be done in the following. Note that for all values of $i$ for which $\mu_i=0$, nothing has to be shown so that we only have to deal with the cases where $\mu_i>0$ below.
		
		The restriction of the number of critical points being non-zero only in every second dimension results in 
			\begin{equation*}
			\dim_{\mathbb{F}} H_{i}(M_v,M_v\backslash\set{v})\leq\mu_i(f; \mathbb{F})=0
			\end{equation*}
		and
		\begin{equation*}
			\dim_{\mathbb{F}} H_{d-i}(M_v,M_v\backslash\set{v})\leq\mu_{d-i}(f; \mathbb{F})=0
		\end{equation*}
	and thus in $H_{i}(M_v,M_v\backslash\set{v})=H_{d-i}(M_v,M_v\backslash\set{v})=0$ for all even $2\leq i\leq \lfloor \frac{d}{2} \rfloor$ and all $v\in V(M)$, as $M$ is $\mathbb{F}$-orientable.
				This implies a splitting of the long exact sequence (\ref{eq:rellongexactpolar}) at every second dimension, yielding exact sequences of the forms
				\begin{equation*}
			0 \ra  H_{i-1}(M_v\backslash\set{v}) \overset{\iota_{i-1}^{*}}{\rightarrow} H_{i-1}(M_v) \ra  H_{i-1}(M_v,M_v\backslash\set{v}) \ra \ldots
			\end{equation*}
		and
				\begin{equation*}
						0 \ra  H_{d-i-1}(M_v\backslash\set{v}) \overset{\iota_{d-i-1}^{*}}{\rightarrow} H_{d-i-1}(M_v) \ra  H_{d-i-1}(M_v,M_v\backslash\set{v}) \ra \ldots,
						\end{equation*}	
					where the inclusions $\iota_{i-1}^{*}$ and $\iota_{d-i-1}^{*}$ are injective for all vertices $v\in V(M)$, again for all even $2\leq i\leq \lfloor \frac{d}{2} \rfloor$. Note in particular, that $\mu_{d-2}=0$ always holds.
				For critical points of index $d-1$, the situation looks alike:
				\begin{equation*}
			\begin{array}{l}
			0\ra \underbrace{H_{d}(M_v\backslash\set{v})}_{=0} \ra H_{d}(M_{v}) \ra H_{d}(M_v,M_v\backslash\set{v}) \ra \\ \ra H_{d-1}(M_v\backslash\set{v}) \overset{\iota_{d-1}^{*}}{\ra} H_{d-1}(M_v)\ra  H_{d-1}(M_v,M_v\backslash\set{v}) \ra \ldots 
			\end{array}
		\end{equation*}
				By assumption, $f$ only has one maximal vertex as it is polar. Then, if $v$ is not the maximal vertex with respect to $f$, $H_{d}(M_v,M_v\backslash\set{v})=0$ and thus $\iota_{d-1}^{*}$ is injective.
				If, on the other hand, $v$ is the maximal vertex with respect to $f$, one has
				\begin{equation*}
			H_{d}(M)\iso H_{d}(M_v,M_v\backslash\set{v}),
		\end{equation*}
		as $M_v=M$ in this case. Consequently, by the exactness of the sequence above, $\iota_{d-1}^{*}$ is also injective in this case.
				Altogether it follows that $\iota_{i}^{*}$ is injective for all $i$ and for all vertices $v\in V(M)$ and thus that $f$ is $\mathbb{F}$-tight. 
	\end{proof}
	
	As we will see in Section~\ref{sec:TightnessKd}, this is a condition that can be translated into a purely combinatorial one. Examples of manifolds to which Theorem~\ref{thm:MorseTight} applies will be given in the following sections.
	
	\begin{remark}\hfill
		\begin{enumerate}[(i)]
		
		\item Theorem~\ref{thm:MorseTight} can be understood as a combinatorial equivalent of Morse's \emph{lacunary principle} \cite[Lecture~2]{Bott82LectMorseTheory}. The lacunary principle in the smooth case states that if $f$ is a smooth Morse function on a smooth manifold $M$, such that its Morse polynomial $M_t(f)$ contains no consecutive powers of $t$, then $f$ is a perfect Morse function.
		
		\item Due to the Morse relations, Theorem~\ref{thm:MorseTight} puts a restriction on the topology of manifolds admitting these kinds of Morse functions. In particular, these must have vanishing Betti numbers in the dimensions where the number of critical points is zero. Note that in dimension $d=3$ the theorem thus only holds for homology $3$-spheres with $\beta_1=\beta_2=0$ and no statements concerning the tightness of triangulations with $\beta_1>0$ can be made. One way of proving the tightness of a $2$-neighborly combinatorial $3$-manifold $M$ would be to show that the mapping
				\begin{equation}
			H_2(M_v)\rightarrow H_2(M_,M_v\backslash \set{v})
			\label{eq:surmaph2}
		\end{equation}
	is surjective for all $v\in V(M)$ and all rsl functions $f$. This would result in an injective mapping in the homology group $H_1(M_v\backslash\set{v})\ra H_1(M_v)$ for all $v\in V(M)$ -- as above by virtue of the long exact sequence for the relative homology -- and thus in the $1$-tightness of $M$, which is equivalent to the ($\mathbb{F}_2$-)tightness of $M$ for $d=3$, see \cite[Prop.~3.18]{Kuehnel95TightPolySubm}.	Unfortunately, there does not seem to be an easy to check combinatorial condition on $M$ that is sufficient for the surjectivity of the mapping (\ref{eq:surmaph2}) for all $v$ and all $f$, in contrast to the case of a combinatorial condition for the $0$-tightness of $M$ for which this is just the $2$-neighborliness of $M$.

		\end{enumerate}
		\label{rem:lacunaryprinciple}
	\end{remark}

\section{Tightness of members of $\mathcal{K}(d)$}
\label{sec:TightnessKd}

	In this section we will investigate the tightness of members of Walkup's class $\mathcal{K}(d)$, the family of all combinatorial $d$-manifolds that only have stacked $(d-1)$-spheres as vertex links. For $d\leq 2$, $\mathcal{K}(d)$ is the set of all triangulated $d$-manifolds. Kalai \cite{Kalai87RigidityLBT} showed that the stacking-condition of the links puts a rather strong topological restriction on the members of $\mathcal{K}(d)$:  
	
	\begin{theorem}[Kalai, \cite{Kalai87RigidityLBT, Bagchi08OnWalkupKd}]
		Let $d\geq 4$. Then $M$ is a connected member of $\mathcal{K}(d)$ if and only if $M$ is obtained from
		a stacked $d$-sphere by $\beta_1(M)$ combinatorial handle additions.
		\label{thm:KalaiKdConnected}
	\end{theorem}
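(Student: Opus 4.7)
The plan is to prove both directions, with the ``if'' direction a direct verification and the ``only if'' direction requiring Kalai's rigidity machinery combined with induction on $\beta_1(M)$.

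For the ``if'' direction, I would first verify that every stacked $d$-sphere lies in $\mathcal{K}(d)$: in $\partial\Delta^{d+1}$ each vertex link is $\partial\Delta^d$, and a stellar subdivision of a facet either leaves a given vertex link unchanged or replaces one of its facets by its stellar subdivision. By induction on the number of subdivisions, every vertex link is a stacked $(d-1)$-sphere. Next I would check that a combinatorial handle addition, which identifies two disjoint facets $\sigma,\tau$ of an $M' \in \mathcal{K}(d)$ whose vertex sets are at combinatorial distance at least three and then removes the identified facet, preserves membership in $\mathcal{K}(d)$: the link of an unaffected vertex is unchanged, while the link of an identified vertex $v$ is, up to a controlled gluing of two stacked $(d-1)$-balls along a common $(d-2)$-face, again a stacked $(d-1)$-sphere (here $d-1 \geq 3$ ensures the gluing is simply connected so that stackedness is preserved). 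Iterating $\beta_1(M)$ times yields the construction.

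For the ``only if'' direction I would induct on $\beta_1(M)$. The main input is Kalai's rigidity theorem: for $d \geq 4$ the $1$-skeleton of a connected simplicial $d$-manifold is generically $(d+1)$-rigid, which yields a lower bound of the shape
\begin{equation*}
f_1(M) \;\geq\; (d+1)f_0(M) - \binom{d+2}{2} + \binom{d+1}{2}\beta_1(M;\mathbb{Q}),
\end{equation*}
with equality precisely for those $M$ obtained from a stacked $d$-sphere by $\beta_1(M)$ handle additions. For $M \in \mathcal{K}(d)$, every vertex link is a stacked $(d-1)$-sphere and hence attains the Barnette equality $f_1(\lk v) = d\, f_0(\lk v) - \binom{d+1}{2}$. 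Summing over all vertices using $\sum_v f_0(\lk v) = 2 f_1(M)$ and $\sum_v f_1(\lk v) = 3 f_2(M)$, together with the Dehn--Sommerville relations and the Euler-characteristic expression of $\beta_1$, forces $M$ into the equality case of Kalai's bound, from which the handle-addition description follows.

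The base case $\beta_1(M) = 0$ reduces to ``connected and in $\mathcal{K}(d)$ with $\beta_1 = 0$ implies stacked $d$-sphere'', which I would handle by a secondary induction on $f_0(M)$: if $f_0 = d+2$ then $M = \partial\Delta^{d+1}$, and otherwise the equality case of the rigidity argument supplies a vertex $v$ whose link contains a reducible facet, so one can invert a stellar subdivision to obtain a smaller $M_0 \in \mathcal{K}(d)$ with $\beta_1(M_0) = 0$ and conclude by induction. For $\beta_1(M) > 0$, the analogous local analysis either allows such a reduction in $f_0(M)$ or exhibits two facets along which $M$ splits as a combinatorial handle addition over some $M_0 \in \mathcal{K}(d)$ with $\beta_1(M_0) = \beta_1(M) - 1$, to which the induction hypothesis applies. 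The principal obstacle is the rigidity-theoretic core --- establishing $(d+1)$-rigidity of the $1$-skeleton and characterising its equality case by handle additions --- and it is precisely here that the hypothesis $d \geq 4$ enters essentially, since the conclusion fails for $d=3$; one proves it via Kalai's cone-lemma and rigidity-matrix argument, or via the more combinatorial framework later developed by Bagchi and Datta.
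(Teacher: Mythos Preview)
The paper does not contain a proof of this theorem at all: it is quoted as a known result, attributed to Kalai and to Bagchi--Datta, and is used only as background input for the paper's own Theorem~\ref{thm:Kd2NeighborlyTight}. So there is no ``paper's own proof'' against which to compare your proposal.

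That said, a few comments on your sketch itself. The outline is in the right spirit --- the ``if'' direction is a straightforward verification, and the ``only if'' direction does ultimately rest on Kalai's rigidity theorem together with an inductive splitting along missing facets --- but two points need correcting. First, the coefficient in your displayed inequality is off: the correct bound (in $g_2$-form) is
\[
g_2(M) \;=\; f_1(M) - (d+1)f_0(M) + \binom{d+2}{2} \;\geq\; \binom{d+2}{2}\,\beta_1(M),
\]
with $\binom{d+2}{2}$ rather than $\binom{d+1}{2}$; you can check this by tracking how a single handle addition changes $f_0$, $f_1$, and $\beta_1$. Second, you attribute this $\beta_1$-weighted inequality and its equality characterisation to Kalai's rigidity theorem, but Kalai's 1987 paper establishes only the case $\beta_1=0$ (i.e.\ $g_2\geq 0$ with equality exactly for stacked spheres); the general inequality with the $\beta_1$ term is due to Novik--Swartz. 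Kalai's actual route to the classification of $\mathcal{K}(d)$ for $d\geq 4$ proceeds instead by showing that any non-simplex member of $\mathcal{K}(d)$ contains a missing $d$-face, along which one can cut to reduce either $f_0$ or $\beta_1$; the Bagchi--Datta argument you allude to at the end follows this combinatorial line more directly. Your inductive framework is compatible with that approach, but the rigidity input you invoke should be Kalai's $g_2\geq 0$ theorem (for the $\beta_1=0$ base case), not the later $\beta_1$-inequality.
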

	
	Here, a \emph{combinatorial handle addition} to a complex $C$ is defined as usual (see \cite{Walkup70LBC34Mnf, Kalai87RigidityLBT, Lutz08FVec3Mnf}) as the complex $C^{\psi}$ obtained from $C$ by identifying two facets $\Delta_1$ and $\Delta_2$ of $C$ such that $v\in V(\Delta_1)$ is identified with $w\in \Delta_2$ only if $\operatorname{d}(v,w)\geq 3$, where $V(X)$ denotes the vertex set of a simplex $X$ and $\operatorname{d}(v,w)$ the distance of the vertices $v$ and $w$ in the $1$-skeleton of $C$ seen as undirected graph (cf. \cite{Bagchi08MinTrigSphereBundCirc}).
	
	In other words, Kalai's theorem states that any connected $M\in\mathcal{K}(d)$ is necessarily homeomorphic to a connected sum with summands of the form $S^1\times S^{d-1}$ and $S^1\dtimes S^{d-1}$, compare \cite{Lutz08FVec3Mnf}. Looking at $2$-neighborly members of $\mathcal{K}(d)$, the following observation concerning the embedding of the triangulation can be made.
	
	\begin{theorem}
		Let $d=2$ or $d\geq 4$. Then any $2$-neighborly member of $\mathcal{K}(d)$ yields a tight triangulation of the underlying PL manifold.
		\label{thm:Kd2NeighborlyTight}
	\end{theorem}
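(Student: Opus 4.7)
The plan is to verify, for every rsl function $f$ on $M$, the hypotheses of Theorem~\ref{thm:MorseTight} with $\mathbb{F}=\mathbb{F}_2$; since $\mathbb{F}_2$-orientability is automatic for any closed manifold, Theorem~\ref{thm:MorseRelations}(iii) will then yield the tightness of $M$. First, $2$-neighborliness of $M$ immediately implies that every rsl function on $M$ is polar: every non-minimum (resp.\ non-maximum) vertex is adjacent to a vertex of smaller (resp.\ larger) $f$-value, so there is a unique critical point of index $0$ and of index $d$. Second, by the standard local picture of polyhedral Morse theory (excision on the star $\str(v)=v*\lk(v)$),
$$\mu_i(v;\mathbb{F}_2)=\dim_{\mathbb{F}_2}\widetilde{H}_{i-1}(L^-_v;\mathbb{F}_2),$$
where the \emph{lower link} $L^-_v$ is the subcomplex of $\lk(v)$ induced on those vertices $w$ with $f(w)<f(v)$. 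Since $M\in\mathcal{K}(d)$, each link $\lk(v)$ is a stacked $(d-1)$-sphere and $L^-_v$ is an induced subcomplex thereof.

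The combinatorial crux is therefore the following lemma: \emph{for every induced subcomplex $A$ of a stacked $(d-1)$-sphere $L$ and every field of coefficients, $\widetilde{H}_j(A)=0$ for all $1\le j\le d-3$}. I would prove this by induction on the number of facets of $L$. The base case $L=\partial\Delta^d$ is immediate, since every induced subcomplex of $\partial\Delta^d$ is either a simplex or $\partial\Delta^d$ itself. For the inductive step, write $L'$ as the stellar subdivision of a smaller stacked sphere $L$ at a facet $\tau$, with new apex $v$. If $v\notin A$ the subcomplex is unchanged; if $v\in A$, one has $L'[A]=L'[A\setminus v]\cup(v*Y)$, glued along $Y$, where $Y$ is either the full simplex on $A\cap\tau$ (when $\tau\not\subseteq A$, hence contractible) or $Y=\partial\tau$ (when $\tau\subseteq A$, a $(d-2)$-sphere). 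In either case a Mayer--Vietoris argument along $Y$ gives $\widetilde{H}_j(L'[A])\cong\widetilde{H}_j(L'[A\setminus v])$ throughout the range $1\le j\le d-3$, and since deleting the single top-dimensional simplex $\tau$ from $L[A\setminus v]$ does not change homology below dimension $d-2$, the induction closes.

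Granting the lemma, $\mu_i(f;\mathbb{F}_2)=\sum_{v\in V(M)}\mu_i(v;\mathbb{F}_2)=0$ for every $2\le i\le d-2$. This range contains every even $i\in[2,\lfloor d/2\rfloor]$ together with every complementary index $d-i$, as well as both middle values $\lfloor d/2\rfloor$ and $\lceil d/2\rceil$ when $d$ is odd (since then $\lceil d/2\rceil\le d-2$). Hence the hypotheses of Theorem~\ref{thm:MorseTight} are met and $f$ is tight; as $f$ was arbitrary, $M$ is tight by Theorem~\ref{thm:MorseRelations}(iii). The case $d=2$ is classical: a $2$-neighborly triangulated surface has complete $1$-skeleton and is tight by Kühnel's theorem for surfaces. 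The main obstacle is the induced-subcomplex lemma on stacked spheres; the Mayer--Vietoris bookkeeping in the subcase $\tau\subseteq A$, where a genuine $(d-2)$-sphere class appears, must be handled with care, but since that class lives in dimension $d-2>d-3$ it does not interfere with the stated vanishing range.
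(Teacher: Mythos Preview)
Your proposal is correct and follows essentially the same route as the paper: reduce to Theorem~\ref{thm:MorseTight} by observing that $2$-neighborliness forces every rsl function to be polar, and that $\mu_i=0$ for $2\le i\le d-2$ follows from the vanishing of $\widetilde{H}_j$ of every induced subcomplex of a stacked $(d-1)$-sphere in the range $1\le j\le d-3$. Your Mayer--Vietoris induction for this lemma is a cleaner variant of the paper's Lemma~\ref{lem:StackedSphereHomology}, whose proof argues more directly via the inclusion $\skel_{d-1}(S_i)\subset\skel_{d-1}(S_{i+1})$ and the $(d-1)$-neighborliness of $S_i\cap T_i=\partial\Delta^d$; both arguments exploit the same structural fact, namely that the only interesting homology arising in the stacking step lives in degree $d-2$ and hence falls outside the stated range.
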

	
	Note that since any triangulated $1$-sphere is stacked, $\mathcal{K}(2)$ is the set of all triangulated surfaces and that any $2$-neighborly triangulation of a surface is tight. The two conditions of the manifold being $2$-neighborly and having only stacked spheres as vertex links are rather strong as the only stacked sphere that is $k$-neighborly, $k\geq 2$, is the boundary of the simplex, see also Remark~\ref{rem:stackedneigh}. Thus, the only $k$-neighborly member of $\mathcal{K}(d)$, $k\geq 3$, $d\geq 2$, is the boundary of the $(d+1)$-simplex.
	
	The following lemma will be needed for the proof of Theorem~\ref{thm:Kd2NeighborlyTight}.

	\begin{lemma}
		Let $S$ be a stacked $d$-sphere, $d\geq 3$, and $V'\subseteq V(S)$.
		Then $H_{d-j}(\Span_S(V'))=0$ for $2\leq j \leq d-1$, where $H_{*}$ denotes the simplicial 	homology groups.
		\label{lem:StackedSphereHomology}
	\end{lemma}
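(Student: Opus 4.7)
The plan is to induct on the number $n\ge 0$ of stellar facet subdivisions required to obtain $S$ from $\partial\Delta^{d+1}$, following the inductive construction of stacked spheres recalled in the introduction. The claim amounts to showing $H_k(\Span_S(V'))=0$ for $1\le k\le d-2$. For the base case $n=0$, either $V'$ contains every vertex of $\partial\Delta^{d+1}$, so $\Span_S(V')$ is the full $d$-sphere, or $V'$ omits at least one vertex, so $\Span_S(V')$ is a full simplex on $V'$; in both situations the homology in the required range vanishes.

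For the inductive step, express $S$ as the stellar subdivision of a stacked $d$-sphere $S^{\ast}$ at a facet $\sigma$, introducing a new apex $w$, and put $V'':=V'\setminus\{w\}$, $A:=\Span_S(V')$, $B:=\Span_{S^{\ast}}(V'')$. The faces of $S$ are exactly those of $S^{\ast}\setminus\{\sigma\}$ together with the cones $\{w\}\ast\tau$ for proper faces $\tau\subsetneq\sigma$, which yields a concrete description of $A$ in the two cases $w\notin V'$ and $w\in V'$.

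If $w\notin V'$, then $A$ coincides with $B=\Span_{S^{\ast}}(V')$ unless $\sigma\subseteq V'$, in which case $A=B\setminus\{\sigma\}$; excision gives $H_{\ast}(B,A)\cong H_{\ast}(\sigma,\partial\sigma)\cong\tilde H_{\ast-1}(S^{d-1})$, which is concentrated in degree $d$, and the long exact sequence of the pair then forces $H_k(A)\cong H_k(B)$ for all $k\le d-2$. If instead $w\in V'$, then $A=(B\setminus\{\sigma\})\cup(\{w\}\ast C)$ with $C:=\Span_{\partial\sigma}(V''\cap\sigma)$. When $\sigma\subseteq V''$ we have $C=\partial\sigma$, and $A$ arises from $B$ by replacing the closed $d$-simplex $\sigma$ with the combinatorial $d$-disk $\{w\}\ast\partial\sigma$ sharing the same boundary, giving $|A|\cong|B|$ as topological spaces. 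When $\sigma\not\subseteq V''$ the complex $C$ is a full simplex on $V''\cap\sigma$, since $\partial\sigma$ contains every proper face of $\sigma$; then both $C$ and $\{w\}\ast C$ are contractible, and Mayer--Vietoris applied to $A=B\cup(\{w\}\ast C)$ with intersection $C$ yields $H_k(A)\cong H_k(B)$ for every $k\ge 1$.

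Combining all four subcases gives $H_k(A)\cong H_k(B)$ throughout the range $1\le k\le d-2$, and the inductive hypothesis applied to the stacked sphere $S^{\ast}$ together with the vertex subset $V''$ closes the argument. The main obstacle is the bookkeeping of the subcases; the only non-routine step is verifying that when $\sigma\not\subseteq V''$ the intersection complex $C$ really is a full simplex, which rests on the fact that $\partial\sigma$ contains every proper face of $\sigma$.
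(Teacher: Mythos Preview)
Your proof is correct. Both you and the paper proceed by induction on the number of stacking operations, with the same base case $\partial\Delta^{d+1}$; the inductive steps, however, are organised differently. The paper views $S_{i+1}$ as $S_i$ glued to the boundary $T_i$ of a new $(d+1)$-simplex along $\partial\Delta^d$, decomposes the span into connected components, and argues via the $d$-neighborliness of $S_i\cap T_i=\partial\Delta^d$; the claimed isomorphisms $H_{d-j}(P_1)\cong H_{d-j}(P_1\cap T_i)$ are stated rather tersely. You instead track directly how $\Span_S(V')$ differs from $\Span_{S^\ast}(V'')$ under a single stellar subdivision, splitting into four explicit subcases according to whether $w\in V'$ and whether $\sigma\subseteq V''$, and invoking either the relative pair $(B,B\setminus\{\sigma\})$ or Mayer--Vietoris for the gluing of the cone $\{w\}\ast C$ along the full subsimplex $C$. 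Your case analysis is more explicit and self-contained; the paper's argument is shorter but leans on the special structure of $\partial\Delta^d$. One tiny edge case worth mentioning: in the subcase $w\in V'$, $\sigma\not\subseteq V''$ with $V''\cap\sigma=\emptyset$, the complex $C$ is empty rather than contractible, so $A$ is $B$ together with the isolated vertex $w$; the conclusion $H_k(A)\cong H_k(B)$ for $k\ge 1$ still holds trivially.
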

	
	\begin{proof}
		Assume that $S_0=\partial \Delta^{d+1}$ and assume $S_{i+1}$ to be obtained from $S_i$ by a single stacking operation such that there exists an $N\in\mathbb{N}$ with $S_N=S$.
				Then $S_{i+1}$ is obtained from $S_i$ by removing a facet of $S_i$ and the boundary of a new $d$-simplex
		$T_i$ followed by a gluing operation of $S_i$ and $T_i$ along the  boundaries of the removed facets. This process can also be understood in terms of a bistellar $0$-move carried out on a facet of $S_i$. 
				Since this process does not remove any $(d-1)$-simplices from $S_i$ or $T_i$ we have
		$\skel_{d-1}(S_i)\subset \skel_{d-1}(S_{i+1})$.
		
				We prove the statement by induction on $i$. Clearly, the statement is true for $i=0$, as $S_0=\partial \Delta^{d+1}$ and $\partial\Delta^{d+1}$ is $(d+1)$-neighborly.		
		Now assume that the statement holds for $S_i$ and let $V_{i+1}'\subset V(S_{i+1})$. In the following we can consider the connected components $C_k$ of $\Span_{S_{i+1}}( V_{i+1}')$ separately.
		If $C_k\subset S_i$ or $C_k \subset T_i$ then the statement is true by assumption and the $(d+1)$-neighborliness of $\partial \Delta^{d+1}$, respectively. Otherwise let
		$P_1:=C_k\cap S_{i}\neq \emptyset$ and $P_2:=C_k\cap T_i\neq \emptyset$. Then
				\begin{equation*}
			H_{d-j}(P_1)\iso H_{d-j}(P_1\cap T_i)\text{ and }H_{d-j}(P_2)\iso H_{d-j}(P_2\cap S_i).
		\end{equation*}
				This yields
				\begin{equation*}
		\begin{array}{l@{}l@{}l}
		H_{d-j}(P_1\cup P_2)
		&=&H_{d-j}((P_1\cup P_2)\cap S_i\cap T_i)\\ 
		&=&H_{d-j}(\Span_{S_i\cap T_i}( V_{i+1}') )\\ 
		&=&H_{d-j}(\Span_{S_i\cap T_i}( V_{i+1}' \cap V(S_i\cap T_i)))\\ 
		&=&0,
		\end{array}
		\end{equation*}
				as $S_i\cap T_i=\partial\Delta^{d}$ which is $(d-1)$-neighborly such that the span of any vertex set has vanishing $(d-j)$-th homology for $2\leq j\leq d-1$.
	\end{proof}
	
	\bigskip

	\begin{proof}[of Theorem~\ref{thm:Kd2NeighborlyTight}]
		For $d=2$, see \cite{Kuehnel95TightPolySubm} for a proof. From now on assume that $d\geq 4$. As can be shown via excision, if $M$ is a combinatorial $d$-manifold,  $f: M\ra \R$ an rsl function on $M$ and $v\in V(M)$, then
				\begin{equation*}
			H_{*}(M_v,M_v\backslash\set{v})\iso H_{*}(M_v\cap \str(v), M_v\cap \lk(v)).
		\end{equation*}
				Now let $d\geq 4$, $1<i<d-1$. The long exact sequence for the relative homology
				\begin{equation*}
			\begin{array}{l}
				\dots\ra H_{d-i}(M_v\cap \str(v))\ra H_{d-i}(M_v\cap \str(v), M_v\cap \lk(v))\ra\\ 
				\ra H_{d-i-1}(M_v\cap \lk(v))\ra H_{d-i-1}(M_v\cap \str(v))\ra\dots
			\end{array}
		\end{equation*}
				yields an isomorphism 
				\begin{equation}
			H_{d-i}(M_v\cap \str(v), M_v\cap \lk(v))\iso H_{d-i-1}(M_v\cap \lk(v)),
			\label{eq:IsomorphyHomologyStarLink}
		\end{equation}
	as $M_v\cap \str(v)$ is a cone over $M_v\cap \lk(v)$, thus contractible and we have $H_{d-i}(M_v\cap \str(v))=H_{d-i-1}(M_v\cap \str(v))=0$.
				
		Since $M\in\mathcal{K}(d)$, all vertex links in $M$ are stacked $(d-1)$-spheres and thus Lemma~\ref{lem:StackedSphereHomology} applies to the right hand side of (\ref{eq:IsomorphyHomologyStarLink}). This implies that a $d$-manifold $M\in \mathcal{K}(d)$, $d\geq 4$, cannot have critical points of index $2\leq i \leq d-2$, i.e. 
		$\mu_2(f; \mathbb{F})=\dots=\mu_{d-2}(f; \mathbb{F})=0$.
		
		Furthermore, the $2$-neighborliness of $M$ implies that any rsl function on $M$ is polar. Thus, all prerequisites of Theorem~\ref{thm:MorseTight} are fulfilled, $f$ is tight and consequently $M$ is a tight triangulation, what was to be shown.
	\end{proof}

	\begin{remark}
		In even dimensions $d\geq 4$, Theorem~\ref{thm:Kd2NeighborlyTight} can also be proved without using Theorem~\ref{thm:MorseTight}. In this case the statement follows from the $2$-neighborliness of $M$ (that yields $\mu_0(f; \mathbb{F})=\beta_0$ and $\mu_d(f; \mathbb{F})=\beta_d$), and the Morse relations \ref{thm:MorseRelations} which then yield $\mu_1(f; \mathbb{F})=\beta_1$ and $\mu_{d-1}(f; \mathbb{F})=\beta_{d-1}$ for any rsl function $f$, as $\mu_2(f; \mathbb{F})=\dots=\mu_{d-2}(f; \mathbb{F})=0$.
	\end{remark}

	As a consequence, the stacking condition of the links already implies the vanishing of $\beta_2,\dots,\beta_{d-2}$ (as by the Morse relations $\mu_i\geq\beta_i$), in accordance with Kalai's Theorem~\ref{thm:KalaiKdConnected}.

	An example of a series of tight combinatorial manifolds is the infinite series of sphere bundles over the circle due to K\"uhnel \cite{Kuehnel86HigherDimCsaszar}. The triangulations in this series are all $2$-neighborly on $f_0=2d+3$ vertices. They are homeomorphic to $S^{d-1}\times S^1$ in even dimensions and to $S^{d-1}\dtimes S^1$ in odd dimensions. Furthermore, all links are stacked and thus Theorem~\ref{thm:Kd2NeighborlyTight} applies providing an alternative proof of the tightness of the triangulations in this series.	
	
	\begin{corollary}
		All members $M^d$ of the series of triangulations in \cite{Kuehnel86HigherDimCsaszar} are $2$-neighborly and lie in the class $\mathcal{K}(d)$. They are thus tight triangulations by Theorem~\ref{thm:Kd2NeighborlyTight}.
	\end{corollary}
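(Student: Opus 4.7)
The plan is to verify directly the two combinatorial hypotheses of Theorem~\ref{thm:Kd2NeighborlyTight} for the triangulations $M^d$ of \cite{Kuehnel86HigherDimCsaszar}, namely (a) that each $M^d$ is $2$-neighborly and (b) that every vertex link of $M^d$ is a stacked $(d-1)$-sphere. Tightness then follows by quoting Theorem~\ref{thm:Kd2NeighborlyTight} (noting that the case $d=3$ is already covered by Kühnel's original construction and the existing literature, so one may assume $d\geq 4$ for the theorem's application).

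For (a), I would recall that these triangulations are defined as cyclic (or ``difference-cyclic'') complexes on the vertex set $\mathbb{Z}_{2d+3}$, with the facet set a union of orbits of a short list of generator simplices under the shift action of $\mathbb{Z}_{2d+3}$. Since the shift acts transitively on vertices, every pair of vertices is equivalent to a pair of the form $\{0,k\}$ with $1\le k\le d+1$, and one reads off from the generator list that each such pair is contained in some facet. Thus $f_1(M^d)=\binom{2d+3}{2}$ and $M^d$ is $2$-neighborly. Equivalently, since all rsl functions are polar on a $2$-neighborly manifold, one could also argue this from the complete edge graph claim in \cite{Kuehnel86HigherDimCsaszar}.

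For (b), the transitive $\mathbb{Z}_{2d+3}$-action implies that all vertex links are combinatorially isomorphic, so it suffices to exhibit a stacking sequence for a single link, say $\lk(0)$. This can be done by listing the $d$-dimensional orbit generators that contain the vertex $0$, reading off the induced facets of $\lk(0)$, and checking that they can be produced from $\partial\Delta^{d}$ by a sequence of stellar subdivisions of facets. Alternatively, one may exploit that the handle structure described by Kühnel realises $M^d$ as the combinatorial handle addition applied to a stacked $d$-sphere in the sense of Theorem~\ref{thm:KalaiKdConnected}, so $M^d\in\mathcal{K}(d)$ as soon as one verifies that the underlying twisted/untwisted sphere bundle $S^{d-1}\times S^1$ or $S^{d-1}\dtimes S^1$ has $\beta_1=1$ handle addition matching the construction.

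The main obstacle is the verification step in (b): confirming combinatorially that every vertex link of Kühnel's cyclic triangulation is stacked. This is a finite but nontrivial check per generator list; the cleanest route is probably the handle-decomposition route via Theorem~\ref{thm:KalaiKdConnected}, since then one only needs to identify the complement of two open facets in $M^d$ with a stacked $d$-ball, which follows from the explicit facet description in \cite{Kuehnel86HigherDimCsaszar}. Once (a) and (b) are in hand, Theorem~\ref{thm:Kd2NeighborlyTight} applies and yields the tightness of every $M^d$, completing the corollary.
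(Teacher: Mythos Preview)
Your proposal is correct and follows essentially the same route as the paper: verify that each $M^d$ is $2$-neighborly with stacked vertex links, then invoke Theorem~\ref{thm:Kd2NeighborlyTight}. The only difference is one of detail---the paper simply asserts both properties as known facts from \cite{Kuehnel86HigherDimCsaszar} (stating that the triangulations are $2$-neighborly on $2d+3$ vertices and that all links are stacked) and gives no separate proof of the corollary, whereas you sketch how one would actually verify these hypotheses from the cyclic construction; your observation about $d=3$ falling outside Theorem~\ref{thm:Kd2NeighborlyTight} is also a nice catch that the paper glosses over by framing the corollary as an \emph{alternative} proof of a tightness result already established in \cite{Kuehnel86HigherDimCsaszar}.
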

		
 	Another example of a triangulation to which Theorem~\ref{thm:Kd2NeighborlyTight} applies is due to Bagchi and Datta \cite{Bagchi08OnWalkupKd}. It is an example of a so called \emph{tight-neighborly} triangulation as defined by Lutz, Sulanke and Swartz \cite{Lutz08FVec3Mnf}. For this class of manifolds, Theorem~\ref{thm:Kd2NeighborlyTight} holds for $d=2$ and $d\geq 4$. Tight-neighborly triangulations will be described in more detail in the next section.

\section{Tight-neighborly triangulations}
\label{sec:TightNeigh}

	Beside the class of combinatorial $d$-manifolds with stacked spheres as vertex links $\mathcal{K}(d)$, Walkup \cite{Walkup70LBC34Mnf} also defined the class $\mathcal{H}(d)$. This is the family of all simplicial complexes that can be obtained from the boundary complex of the $(d+1)$-simplex by a series of zero or more of the following three operations: (i) stellar subdivision of facets, (ii) combinatorial handle additions and (iii) forming connected sums of objects obtained from the first two operations.
	
	The two classes are closely related. Obviously, the relation $\mathcal{H}(d)\subset \mathcal{K}(d)$ holds. Kalai \cite{Kalai87RigidityLBT} showed the reverse inclusion $\mathcal{K}(d)\subset \mathcal{H}(d)$ for $d\geq 4$. 
	
	Note that the condition of the $2$-neighborliness of an $M\in \mathcal{K}(d)$ in Theorem~\ref{thm:Kd2NeighborlyTight} is equivalent to the first Betti number $\beta_1(M)$ being maximal with respect to the vertex number $f_0(M)$ of $M$ (as a $2$-neighborly triangulation does not allow any  handle additions). Such manifolds are exactly the cases of equality in \cite[Th.~5.2]{Novik08SocBuchsMod}. In their recent work \cite{Lutz08FVec3Mnf}, Lutz, Sulanke and Swartz prove the following\footnote{The author would like to thank Frank Lutz for fruitful discussions about tight-neighborly triangulations and pointing him to the work \cite{Lutz08FVec3Mnf} in the first place.} 
	
	\begin{theorem}[Theorem $5$ in \cite{Lutz08FVec3Mnf}]
		Let $\mathbb{K}$ be any field and let $M$ be a $\mathbb{K}$-orientable triangulated $d$-manifold with $d\geq 3$. Then
				\begin{equation}
			f_0(M)\geq \left\lceil \frac{1}{2}\left(2d+3+\sqrt{1+4(d+1)(d+2)\beta_1(M;\mathbb{K})} \right)	\right\rceil.
			\label{eq:TightNeighEq}
		\end{equation}
	\end{theorem}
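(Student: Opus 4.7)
The plan is to combine a sharpened lower bound theorem due to Novik and Swartz with the trivial edge count, and then to solve the resulting quadratic inequality in $f_0(M)$.

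First I would invoke the inequality of Novik and Swartz \cite{Novik08SocBuchsMod}: for any $\mathbb{K}$-orientable triangulated $d$-manifold $M$ with $d\geq 3$,
\[
f_1(M) - (d+1)f_0(M) + \binom{d+2}{2} \;\geq\; \binom{d+2}{2}\,\beta_1(M;\mathbb{K}).
\]
This strengthens the classical Walkup--Kalai lower bound theorem (in which the right-hand side is $0$) by recording the contribution of the first Betti number, and it is the main non-elementary ingredient of the proof.

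Second, I would combine the above with the obvious estimate $f_1(M)\leq \binom{f_0(M)}{2}$, which simply reflects that every edge is a pair of distinct vertices. Substituting, multiplying through by $2$, and using the identity $2\binom{d+2}{2}=(d+1)(d+2)$, the hypothesis is transformed into the quadratic inequality
\[
f_0(M)^2 - (2d+3)\,f_0(M) + (d+1)(d+2)\bigl(1-\beta_1(M;\mathbb{K})\bigr) \;\geq\; 0.
\]
A direct computation shows that the discriminant of this quadratic equals precisely $1+4(d+1)(d+2)\,\beta_1(M;\mathbb{K})$, which matches the square root in~(\ref{eq:TightNeighEq}).

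Finally, I would argue that $f_0(M)$ must lie above the larger of the two roots, not below the smaller one. Indeed, the smaller root equals $\tfrac{1}{2}\bigl(2d+3-\sqrt{1+4(d+1)(d+2)\beta_1(M;\mathbb{K})}\bigr)$, which is bounded above by $d+1$ (its value when $\beta_1=0$, and the expression is non-increasing in $\beta_1$), whereas any triangulated $d$-manifold satisfies $f_0(M)\geq d+2$. Hence the non-negativity of the quadratic forces $f_0(M)$ to exceed the larger root; rounding up to an integer yields (\ref{eq:TightNeighEq}). The essential obstacle is the first step: the Novik--Swartz inequality relies on the socle theory of Buchsbaum modules and is highly non-trivial, while everything after it is routine algebra.
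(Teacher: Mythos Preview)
Your argument is correct. The paper itself does not supply a proof of this theorem; it is quoted verbatim as Theorem~5 of \cite{Lutz08FVec3Mnf} and used as an input to the subsequent discussion of tight-neighborly triangulations. Your derivation---Novik--Swartz's inequality $g_2=f_1-(d+1)f_0+\binom{d+2}{2}\geq\binom{d+2}{2}\beta_1$, the trivial bound $f_1\leq\binom{f_0}{2}$, and solving the resulting quadratic---is exactly the standard route and matches how the result is obtained in \cite{Lutz08FVec3Mnf}; indeed, the paper's own Remark immediately following the theorem records the equivalent form $\binom{f_0-d-1}{2}\geq\binom{d+2}{2}\beta_1$, which is precisely what your first two steps combine to give.
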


	\begin{remark}
		As pointed out in \cite{Lutz08FVec3Mnf}, for $d=2$ inequality (\ref{eq:TightNeighEq}) coincides with Heawood's inequality 
				\begin{equation*}
			f_0(M)\geq \left\lceil \frac{1}{2}\left(7+\sqrt{49-24\chi(M)}\right)\right\rceil,
		\end{equation*}
				if one replaces $\beta_1(M;\mathbb{K})$ by $\frac{1}{2}\beta_1(M;\mathbb{K})$ to account for the double counting of the middle Betti number $\beta_1(M;\mathbb{K})$ of surfaces by Poincar\'{e} duality. Inequality (\ref{eq:TightNeighEq}) can also be written in the form
				\begin{equation*}
			\binom{f_0 -d -1}{2}\geq \binom{d+2}{2}\beta_1.  
		\end{equation*}
				Thus, Theorem $5$ in \cite{Lutz08FVec3Mnf} settles K\"uhnel's conjectured bounds
				\begin{equation*}
					\binom{f_0-d+j-2}{j+1}\geq \binom{d+2}{j+1} \beta_j\quad\text{with}\quad 1\leq j\leq \lfloor \frac{d-1}{2} \rfloor 
		\end{equation*}
				in the case $j=1$.
	\end{remark}

	For $\beta_1=1$, the bound (\ref{eq:TightNeighEq}) coincides with the Brehm-K\"uhnel bound $f_0\geq 2d+4-j$ for $(j-1)$-connected but not $j$-connected $d$-manifolds in the case $j=1$, see \cite{Brehm87CombMnfFewVert}. Inequality (\ref{eq:TightNeighEq}) is sharp by the series of vertex minimal triangulations of sphere bundles over the circle presented in \cite{Kuehnel86HigherDimCsaszar}.
	
	Triangulations of connected sums of sphere bundles $(S^{2}\times S^1)^{\#k}$ and $(S^{2}\dtimes S^1)^{\#k}$ attaining equality in (\ref{eq:TightNeighEq}) for $d=3$ were discussed in \cite{Lutz08FVec3Mnf}. Note that such triangulations are necessarily $2$-neighborly.
	
	\begin{definition}[tight-neighborly triangulation, \cite{Lutz08FVec3Mnf}]
		Let $d\geq 2$ and let $M$ be a triangulation of $(S^{d-1}\times S^1)^{\#k}$ or $(S^{d-1}\dtimes S^1)^{\#k}$ attaining equality in (\ref{eq:TightNeighEq}). Then $M$ is called a \emph{tight-neighborly} triangulation.
	\end{definition}
	
	For $d\geq 4$, all triangulations of  $\mathbb{F}$-orientable $\mathbb{F}$-homology $d$-manifolds with equality in (\ref{eq:TightNeighEq}) lie in $\mathcal{H}(d)$ and are tight-neighborly triangulations of $(S^{d-1}\times S^1)^{\#k}$ or $(S^{d-1}\dtimes S^1)^{\#k}$ by Theorem 5.2 in \cite{Novik08SocBuchsMod}.
	
	The authors conjectured \cite[Conj.~13]{Lutz08FVec3Mnf} that all tight-neighborly triangulations are tight in the classical sense of Definition~\ref{def:BasicTightComb} and showed that the conjecture holds in the following cases: for $\beta_1=0,1$ and any $d$ and for $d=2$ and any $\beta_1$. Indeed, the conjecture also holds for any $d\geq 4$ and any $\beta_1$ as a direct consequence of Theorem~\ref{thm:Kd2NeighborlyTight}.
	
	\begin{corollary}
		For $d\geq 4$, all tight-neighborly triangulations are tight. 
	\end{corollary}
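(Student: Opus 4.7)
The plan is to reduce the corollary to Theorem~\ref{thm:Kd2NeighborlyTight} by verifying its two hypotheses: membership in $\mathcal{K}(d)$ and $2$-neighborliness. Both facts are essentially recorded already in the discussion preceding the corollary, so the proof will be very short.

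First, I would fix a tight-neighborly triangulation $M$ of $(S^{d-1}\times S^1)^{\#k}$ or $(S^{d-1}\dtimes S^1)^{\#k}$ with $d\geq 4$, so that $M$ realizes equality in inequality (\ref{eq:TightNeighEq}). By the consequence of \cite[Thm.~5.2]{Novik08SocBuchsMod} quoted in the paragraph preceding the definition of tight-neighborly triangulations, any such $M$ lies in Walkup's class $\mathcal{H}(d)\subset\mathcal{K}(d)$. In particular all vertex links of $M$ are stacked $(d-1)$-spheres, so the ``stacked link'' hypothesis of Theorem~\ref{thm:Kd2NeighborlyTight} is satisfied.

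Next I would verify $2$-neighborliness. Rewriting (\ref{eq:TightNeighEq}) in the equivalent form $\binom{f_0-d-1}{2}\geq \binom{d+2}{2}\beta_1$, equality forces the vertex count and the first Betti number to be related by $\binom{f_0-d-1}{2}= \binom{d+2}{2}\beta_1$. By Kalai's Theorem~\ref{thm:KalaiKdConnected}, $M$ is obtained from a stacked $d$-sphere $S$ on some $n_0$ vertices by exactly $\beta_1(M)$ combinatorial handle additions, so $f_0(M)=n_0$ and the number of facets drops accordingly; a direct count (as in the derivation of the bound in \cite{Lutz08FVec3Mnf}) shows that equality in (\ref{eq:TightNeighEq}) forces $n_0=d+2$, i.e.\ $S=\partial\Delta^{d+1}$, which is $(d+1)$-neighborly. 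Since handle additions identify facets without introducing new missing edges between the remaining vertices (the distance $\geq 3$ condition guarantees that no pair of vertices that spanned an edge in $S$ is collapsed into a non-edge), the resulting complex $M$ is $2$-neighborly. This is the heart of the reasoning and the only step that isn't completely mechanical; the key point is that equality in the Heawood-type bound was derived precisely by comparing $f$-vectors under the assumption of $2$-neighborliness, so equality must return $2$-neighborliness.

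Having established that $M\in\mathcal{K}(d)$ is $2$-neighborly with $d\geq 4$, Theorem~\ref{thm:Kd2NeighborlyTight} applies verbatim and yields that $M$ is a tight triangulation, completing the proof. The main (and only real) obstacle is the combinatorial verification of $2$-neighborliness from equality in (\ref{eq:TightNeighEq}); everything else is a direct citation.
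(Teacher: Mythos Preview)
Your overall strategy matches the paper's: show $M\in\mathcal{K}(d)$ and $M$ is $2$-neighborly, then invoke Theorem~\ref{thm:Kd2NeighborlyTight}. The first step, via Novik--Swartz and $\mathcal{H}(d)\subset\mathcal{K}(d)$, is fine and is exactly what the paper uses.

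The argument you give for $2$-neighborliness, however, breaks down. First, handle additions in the sense of the paper identify two $d$-facets and hence $d+1$ pairs of vertices, so $f_0(M)=n_0-(d+1)\beta_1$, not $f_0(M)=n_0$. Second, even granting your vertex count, the equality $\binom{f_0-d-1}{2}=\binom{d+2}{2}\beta_1$ does not force $n_0=d+2$ for $\beta_1>0$. Third, if one did have $S=\partial\Delta^{d+1}$, no handle addition is possible at all: with only $d+2$ vertices every pair is adjacent, so the distance-$\geq 3$ condition cannot be met. Finally, a stacked $d$-sphere on more than $d+2$ vertices is never $2$-neighborly (it has diagonals), so the claim that handle additions ``don't introduce missing edges'' is beside the point; the missing edges are already there and must be killed by the identifications, which requires a genuine count.

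The paper sidesteps all of this by simply citing the fact, recorded just before the definition of tight-neighborly, that triangulations attaining equality in (\ref{eq:TightNeighEq}) are necessarily $2$-neighborly; this is established in \cite{Lutz08FVec3Mnf} (and implicitly in \cite[Thm.~5.2]{Novik08SocBuchsMod}) as part of the derivation of the bound itself. Once that is accepted, the proof is the one-line application of Theorem~\ref{thm:Kd2NeighborlyTight} that the paper gives. If you want a self-contained argument, the correct route is the $f$-vector computation for members of $\mathcal{H}(d)$ (stacked sphere plus $\beta_1$ handles) showing that equality in (\ref{eq:TightNeighEq}) forces $f_1=\binom{f_0}{2}$, not the Kalai-based sketch you propose.
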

	
	\begin{proof}
		For $d\geq 4$, one has $\mathcal{H}(d)=\mathcal{K}(d)$ and the statement is true for all $2$-neighborly members of $\mathcal{K}(d)$ by Theorem~\ref{thm:Kd2NeighborlyTight}.
	\end{proof}
	
	It remains to be investigated, whether for vertex minimal triangulations of $d$-handlebodies, $d\geq 3$, the reverse implication is true, too, i.e.\  that for this class of triangulations the terms of tightness and tight-neighborliness are equivalent.
 
	\begin{question}
	 	Let $d\geq 4$ and let $M$ be a tight triangulation homeomorphic to $(S^{d-1}\times S^{1})^{\#k}$ or $(S^{d-1}\dtimes S^{1})^{\#k}$. Does this imply that $M$ is tight-neighborly?
	\end{question}
	
	As was shown in \cite{Lutz08FVec3Mnf}, at least for values of $\beta_1=0,1$ and any $d$ and for $d=2$ and any $\beta_1$ this is true.
	
	\bigskip
	
	One example of a triangulation for which Theorem~\ref{thm:Kd2NeighborlyTight} holds is due to Bagchi and Datta \cite{Bagchi08OnWalkupKd}. The triangulation $M^4_{15}$ of $(S^3\dtimes S^1)^{\#3}$ from \cite{Bagchi08OnWalkupKd} is a $2$-neighborly combinatorial $4$-manifold on $15$ vertices that is a member of $\mathcal{K}(4)$ with $f$-vector $f=(15,\,105,\,230,\,240,\,96)$. Since  $M^4_{15}$ is tight-neighborly, we have the following corollary.
	
	\begin{corollary}
		The $4$-manifold $M^4_{15}$ given in \cite{Bagchi08OnWalkupKd} is a tight triangulation.
	\end{corollary}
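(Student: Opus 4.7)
The plan is to invoke Theorem~\ref{thm:Kd2NeighborlyTight} directly, since every hypothesis has already been recorded in the paragraph preceding the corollary. Specifically, the triangulation $M^4_{15}$ from \cite{Bagchi08OnWalkupKd} is a combinatorial $4$-manifold (so $d=4\geq 4$), it is $2$-neighborly, and it lies in Walkup's class $\mathcal{K}(4)$. These three facts are exactly the hypotheses of Theorem~\ref{thm:Kd2NeighborlyTight}, which concludes tightness.

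First I would spell out that the $2$-neighborliness can be read off the $f$-vector: $f_1 = 105 = \binom{15}{2}$ shows that every pair of vertices spans an edge, so $M^4_{15}$ contains the complete $1$-skeleton on its vertex set. Next I would recall from \cite{Bagchi08OnWalkupKd} that each vertex link in $M^4_{15}$ is a stacked $3$-sphere, which is precisely the defining property of membership in $\mathcal{K}(4)$. (Alternatively, one can appeal to Kalai's Theorem~\ref{thm:KalaiKdConnected}, since $M^4_{15}$ is homeomorphic to $(S^3 \dtimes S^1)^{\#3}$, a connected sum of the type forced by Kalai's description, combined with the stacking structure established in \cite{Bagchi08OnWalkupKd}.)

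With both hypotheses verified, a single sentence applying Theorem~\ref{thm:Kd2NeighborlyTight} in dimension $d=4$ completes the proof. There is essentially no obstacle: the only conceivable subtlety would be if one had to re-verify that the vertex links are in fact stacked, but this is exactly the content of \cite{Bagchi08OnWalkupKd}, so no new argument is required. The corollary is therefore a one-line consequence of the main theorem of Section~\ref{sec:TightnessKd}.
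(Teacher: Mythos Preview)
Your proposal is correct and matches the paper's approach: the corollary is stated without explicit proof because the preceding paragraph already records that $M^4_{15}$ is a $2$-neighborly combinatorial $4$-manifold in $\mathcal{K}(4)$ (and is tight-neighborly), so Theorem~\ref{thm:Kd2NeighborlyTight} applies directly. Your additional remark that $f_1=105=\binom{15}{2}$ makes the $2$-neighborliness explicit is a nice touch, but otherwise there is nothing to add.
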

	
	The next possible triples of values of $\beta_1$, $d$ and $n$ for which a 2-neighborly member of $\mathcal{K}(d)$ could exist (compare \cite{Lutz08FVec3Mnf}) are listed in Table~\ref{tab:PossTriplesKd}. Apart from the sporadic examples in dimension $4$ and the infinite series of higher dimensional analogues of Cs{\'a}sz{\'a}r's torus in arbitrary dimension $d\geq 2$ due to Kühnel \cite{Kuehnel86HigherDimCsaszar}, cf. \cite{Kuehnel96PermDiffCyc, Bagchi08MinTrigSphereBundCirc, Chestnut08EnumPropTrigSpherBund}, mentioned earlier, no further examples are known as of today.

	\begin{table}
		\caption{Known and open cases for $\beta_1$, $d$ and $n$ of $2$-neighborly members of $\mathcal{K}(d)$.}
		\label{tab:PossTriplesKd}

		\centering
		\begin{tabular}{l|l|l|l|l}
			$\beta_1$&$d$&$n$&top.\ type&reference\\ \hline
			
			$0$&any $d$&$d+1$&$S^{d-1}$&$\partial \Delta^d$\\
			$1$&any even $d\geq 2$&$2d+3$&$S^{d-1}\times S^1$&\cite{Kuehnel86HigherDimCsaszar} ($d=2$: \cite{Moebius86Werke, Csaszar49PolyWithoutDiags})\\
			$1$&any odd $d\geq 2$&$2d+3$&$S^{d-1}\dtimes S^1$&\cite{Kuehnel86HigherDimCsaszar} ($d=3$: \cite{Walkup70LBC34Mnf, Altshuler74NeighComb3Mnf9Vert})\\
			$2$&$13$&$35$&?&\\
			$3$&$4$&$15$&$(S^3\dtimes S^1)^{\#3}$&\cite{Bagchi08OnWalkupKd}\\ 
			$5$&$5$&$21$&?&\\ 
			$8$&$10$&$44$&?&\\ 
		\end{tabular}
	\end{table}
	
	Especially in (the odd) dimension $d=3$, things seem to be a bit more subtle, as already laid out in Remark~\ref{rem:lacunaryprinciple}.
	As Altshuler and Steinberg \cite{Altshuler73Neigh4Poly9Vert} showed that the link of any vertex in a neighborly $4$-polytope is stacked (compare also Remark~$5$ in \cite{Kalai87RigidityLBT}), we know that the class $\mathcal{K}(3)$ is rather big compared to $\mathcal{H}(3)$. Thus, a statement equivalent to Theorem~\ref{thm:Kd2NeighborlyTight} is not surprisingly false for members of $\mathcal{K}(3)$, a counterexample being the boundary of the cyclic polytope $\partial C(4,6)\in \mathcal{K}(3)$ which is $2$-neighborly but certainly not a tight triangulation as it has empty triangles.
	The only currently known non-trivial example of a tight-neighborly combinatorial $3$-manifold is a $9$-vertex triangulation $M^3$ of $S^2\dtimes S^1$, independently found by Walkup \cite{Walkup70LBC34Mnf} and Altshuler and Steinberg \cite{Altshuler74NeighComb3Mnf9Vert}. This triangulation is combinatorially unique, as was shown by Bagchi and Datta \cite{Bagchi08UniqWalkup9V3DimKleinBottle}. For $d=3$, it is open whether there exist tight-neighborly triangulations for higher values of $\beta_1\geq 2$, see \cite[Question 12]{Lutz08FVec3Mnf}.
	
	The fact that $M^3$ is a tight triangulation is well known, see \cite{Kuehnel95TightPolySubm}. Yet, we will present here another proof of the tightness of $M^3$. It is a rather easy procedure when looking at the $4$-polytope $P$ the boundary of which $M^3$ was constructed from by one elementary combinatorial handle addition, see also \cite{Bagchi08OnWalkupKd}.
	
	\begin{lemma}
		Walkup's $9$-vertex triangulation $M^3$ of $S^2\dtimes S^1$ is tight.
	\end{lemma}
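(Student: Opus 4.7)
The plan is to exploit that $M^3$ is obtained from the boundary $\partial P$ of a simplicial $4$-polytope $P$ by one elementary combinatorial handle addition, as made explicit in \cite{Walkup70LBC34Mnf, Bagchi08OnWalkupKd, Bagchi08UniqWalkup9V3DimKleinBottle}. Write $\psi\colon V(\Delta_1)\to V(\Delta_2)$ for the identification of the two disjoint tetrahedral facets $\Delta_1,\Delta_2\subset \partial P$, and let $q\colon (\partial P)\setminus\{\operatorname{int}\Delta_1,\operatorname{int}\Delta_2\}\to M^3$ denote the induced quotient map. Since $P$ is convex, $\partial P$ is tight in its canonical embedding, so every rsl function on $\partial P$ is a perfect Morse function.

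First I would reduce the problem. For $d=3$, tightness over $\mathbb{F}_2$ is equivalent to $1$-tightness (\cite[Prop.~3.18]{Kuehnel95TightPolySubm}). Since $M^3$ is $2$-neighborly, every rsl function $f$ on $M^3$ is polar, so $\mu_0(f;\mathbb{F}_2)=\mu_3(f;\mathbb{F}_2)=1$; combined with $\chi(M^3)=0$ the Morse relations (Theorem~\ref{thm:MorseRelations}) give $\mu_1(f;\mathbb{F}_2)=\mu_2(f;\mathbb{F}_2)$. It therefore suffices to prove $\mu_1(f;\mathbb{F}_2)=\beta_1(M^3;\mathbb{F}_2)=1$ for every rsl $f$ on $M^3$.

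Given such $f$, I would lift it to an rsl function $\tilde f$ on $\partial P$: for each identified vertex $v\in V(M^3)$ with preimages $u_1,u_2\in V(\partial P)$, set $\tilde f(u_1)=f(v)$ and $\tilde f(u_2)=f(v)+\epsilon_v$ with $\epsilon_v>0$ small and generic so that $\tilde f$ is regular. Tightness of $\partial P$ makes $\tilde f$ perfect, so $\mu_i(\tilde f;\mathbb{F}_2)=0$ for $i\in\{1,2\}$. The comparison $\mu_\ast(f)\leftrightarrow \mu_\ast(\tilde f)$ is then performed vertex-by-vertex using the isomorphism~(\ref{eq:IsomorphyHomologyStarLink}): at a vertex not involved in the identification the lower links in $M^3$ and in $\partial P$ agree, and at each of the four identified vertices the lower link in $M^3$ is expressed via Mayer--Vietoris as the gluing of the corresponding lower links of $u_1,u_2$ in $\partial P$ along a common subcomplex sitting inside the identified facet.

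Summing the local contributions, the handle addition contributes a net total of $+1$ to $\mu_1(f;\mathbb{F}_2)$ and $+1$ to $\mu_2(f;\mathbb{F}_2)$, leaving all other indices unaffected. Thus $\mu_1(f;\mathbb{F}_2)=0+1=1$, $M^3$ is $1$-tight, and hence tight. The main technical obstacle is the vertex-wise Mayer--Vietoris bookkeeping: one must show that the total handle contribution is \emph{exactly} $+1$ in each middle index, not larger, which uses (i) the contractibility of the identified facet $\Delta_1 \sim_\psi \Delta_2$ as a subcomplex of $M^3$, and (ii) the graph-distance condition $\operatorname{d}(v,\psi(v))\geq 3$, which ensures that no spurious cycles arise from the identification beyond the single handle loop that generates $H_1(M^3;\mathbb{F}_2)$.
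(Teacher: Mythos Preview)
There is a genuine gap in your argument: the claim that ``since $P$ is convex, $\partial P$ is tight in its canonical embedding, so every rsl function on $\partial P$ is a perfect Morse function'' is false. Convexity of $P$ makes $\partial P$ tight only with respect to the affine embedding $\partial P\subset\mathbb{R}^4$, i.e.\ with respect to the height functions coming from linear forms on $\mathbb{R}^4$. The combinatorial notion used here (Definition~\ref{def:tighthomcomb}(ii)) refers to the canonical embedding $\partial P\subset\Delta^{12}\subset E^{12}$, where the rsl functions are arbitrary injective vertex orderings. The stacked $4$-polytope $P$ has $f_1(P)=42$, so $\partial P$ has $\binom{13}{2}-42=36$ diagonals and is far from $2$-neighborly; in particular it is not even $0$-tight. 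Concretely, pick the two vertices $1$ and $10$ (which have $\operatorname{d}(1,10)\geq 3$): an rsl function whose two smallest values are attained at $1$ and $10$ already has $\mu_0\geq 2$. Since your lifted function $\tilde f$ places each identified pair at nearly equal height, exactly this situation occurs, and you have no control on $\mu_1(\tilde f),\mu_2(\tilde f)$. Your downstream computation $\mu_1(f)=\mu_1(\tilde f)+1=0+1$ therefore collapses. The Mayer--Vietoris bookkeeping you flag as the ``main technical obstacle'' is a second, independent gap: you do not actually verify that the handle contributes \emph{exactly} $+1$ in each middle index for every ordering, and the conditions (i) and (ii) you cite do not by themselves preclude extra critical points arising at the identified vertices.

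The paper's proof avoids Morse-theoretic lifting entirely. It exploits a much simpler combinatorial fact specific to this construction: because $P$ is stacked, $\partial P$ has no empty triangles (only diagonals and empty tetrahedra), and the handle identification---with the distance-$3$ condition---produces a $2$-neighborly complex in which any two empty triangles in the span of a vertex subset are homologous. Tightness then follows directly from $0$-tightness plus this control on $H_1$ of spans, with no reference to $\mu$-numbers on $\partial P$.
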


	\begin{proof}
		Take the stacked $4$-polytope $P$ with $f$-vector $f(P)=(13, 42, 58, 37, 9)$ from \cite{Walkup70LBC34Mnf}. Its facets are
				\begin{center}
			\begin{tabular}{lll}
			$\langle 1\,2\,3\,4\,5 \rangle$, &
			$\langle 2\,3\,4\,5\,6 \rangle$, &
			$\langle 3\,4\,5\,6\,7 \rangle$,\\ 
			
			$\langle 4\,5\,6\,7\,8 \rangle$, &
			$\langle 5\,6\,7\,8\,9 \rangle$, &
			$\langle 6\,7\,8\,9\,10 \rangle$,\\ 
			
			$\langle 7\,8\,9\,10\,11 \rangle$, &
			$\langle 8\,9\,10\,11\,12 \rangle$, &
			$\langle 9\,10\,11\,12\,13 \rangle$.
			\end{tabular}
		\end{center}
				As $P$ is stacked it has missing edges (called \emph{diagonals}), but no empty faces of higher dimension. 
		
		Take the boundary $\partial P$ of $P$. By construction, $P$ has no inner $i$-faces for $0\leq i\leq 2$, so that $\partial P$ has the 36 diagonals of $P$ and additionally $8$ empty tetrahedra, but no empty triangles. As $\partial P$ is a $3$-sphere, the empty tetrahedra are all homologous to zero.  
		
		Now form a $1$-handle over $\partial P$ by removing the two tetrahedra $\langle 1,2,3,4\rangle$ and $\langle 10,11,12,13\rangle$ from $\partial P$ followed by an identification of the four vertex pairs $(i, i+9)$, $1\leq i \leq 4$, where the newly identified vertices are labeled with $1,\dots,4$.
		
		This process yields a $2$-neighborly combinatorial manifold $M^3$ with $1$3$-4=9$ vertices and one additional empty tetrahedron $\langle 1,2,3,4\rangle$, which is the generator of $H_2(M)$.
		
		As $M^3$ is $2$-neighborly, it is $0$-tight and as $\partial P$ had no empty triangles, two empty triangles in the span of any vertex subset $V'\subset V(M)$ are always homologous. Thus, $M^3$ is a tight triangulation. 
	\end{proof}
	
	The construction in the proof above could probably be used in the general case with $d=3$ and $\beta_1\geq 2$: one starts with a stacked $3$-sphere $M_0$ as the boundary of a stacked $4$-polytope which by construction does not contain empty $2$-faces and then successively forms handles over this boundary $3$-sphere (obtaining triangulated manifolds $M_1,\dots,M_n=M$) until the resulting triangulation $M$ is $2$-neighborly and fulfills equality in (\ref{eq:TightNeighEq}). Note that this can only be done in the regular cases of (\ref{eq:TightNeighEq}), i.e. where (\ref{eq:TightNeighEq}) admits integer solutions for the case of equality. For a list of possible configurations see \cite{Lutz08FVec3Mnf}.

\section{$k$-stacked spheres and the class $\mathcal{K}^k(d)$}
\label{sec:Kkd}

	McMullen and Walkup \cite{McMullen71GeneralizedLBC} extended the notion of stacked polytopes to \emph{$k$-stacked polytopes} as simplicial $d$-polytopes that can be triangulated without introducing new $j$-faces for $0\leq j\leq d-k-1$. 
	
	\begin{definition}[$k$-stacked balls and spheres, \cite{McMullen71GeneralizedLBC, Kalai87RigidityLBT}]\hfill
		A \emph{$k$-stacked $(d+1)$-ball}, $0\leq k\leq d$, is a triangulated $(d+1)$-ball that has no interior $j$-faces, $0\leq j\leq d-k$. A \emph{minimally $k$-stacked $(d+1)$-ball} is a $k$-stacked $(d+1)$-ball that is not $(k-1)$-stacked. The boundary of any (minimally) $k$-stacked $(d+1)$-ball is called a \emph{(minimally) $k$-stacked $d$-sphere}.
		\label{def:kStackedSphere}
	\end{definition}
	
	Note that in this context the ordinary stacked $d$-spheres are exactly the $1$-stacked $d$-spheres. Note furthermore, that a $k$-stacked $d$-sphere is obviously also $(k+l)$-stacked, where $l\in\mathbb{N}$, $k+l\leq d$, compare \cite{Bagchi08LBTNormPseudoMnf}. The simplex $\Delta^{d+1}$ is the only $0$-stacked $(d+1)$-ball and the boundary of the simplex $\partial\Delta^{d+1}$ is the only $0$-stacked $d$-sphere. Keep in mind that all triangulated $d$-spheres are at least $d$-stacked \cite[Rem.~9.1]{Bagchi08LBTNormPseudoMnf}.
	
	\begin{figure}		\centering
		\includegraphics[height=4cm]{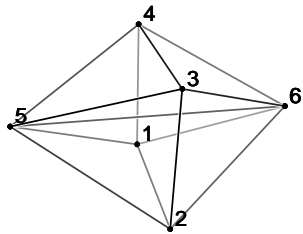}
		\caption{A minimally $2$-stacked $S^2$ as the boundary complex of a subdivided $3$-octahedron.}
		\label{fig:2stackeds2}
	\end{figure}
	
	Figure~\ref{fig:2stackeds2} shows the boundary of an octahedron as an example of a minimally $2$-stacked $2$-sphere $S$ with $6$ vertices. The octahedron that is subdivided along the inner diagonal $(5,6)$ can be regarded as a triangulated $3$-ball $B$ with $\skel_0(S)=\skel_0(B)$ and $\partial B=S$. Note that although all vertices of $B$ are on the boundary, there is an inner edge such that the boundary is $2$-stacked, but not $1$-stacked. In higher dimensions, examples of minimally $d$-stacked $d$-spheres exist as boundary complexes of subdivided $d$-cross polytopes with an inner diagonal. 

	Akin to the $1$-stacked case, a more geometrical characterization of $k$-stacked $d$-spheres can be given via bistellar moves (also known as Pachner moves, see \cite{Pachner87KonstrMethKombHomeo}), at least for $k\leq \lceil \frac{d}{2}\rceil$. 

	\begin{definition}[bistellar moves]
		Let $M$ be a triangulated $d$-manifold and let $A$ be a $(d-i)$-face of $M$, $0\leq i\leq d$, such that there exists an $i$-simplex $B$ that is not a face of $M$ with $\lk_M(A)=\partial B$. Then a \emph{bistellar $i$-move}  $\Phi_A$ on $M$ is defined by
		\begin{equation*}
			\Phi_A(M):=\left(M\backslash (A*\partial B)\right)\cup (\partial A * B),
		\end{equation*}
		where $*$ denotes the join operation for simplicial complexes. Bistellar $i$-moves with $i> \lfloor\frac{d}{2}\rfloor$ are also-called \emph{reverse $(d-i)$-moves}. 
	\end{definition}

	See Figure \ref{fig:flips} for an example illustration of bistellar moves in dimension $d=3$. Note that for any bistellar move $\Phi_A(M)$, $A* B$ forms a $(d+1)$-simplex. Thus, any sequence of bistellar moves defines a sequence of $(d+1)$-simplices --- this we will call the \emph{induced sequence of $(d+1)$-simplices} in the following.	
	
	The characterization of $k$-stacked $d$-spheres using bistellar moves is the following. 
	
	\begin{lemma}
	For $k\leq \lceil \frac{d}{2}\rceil$, a complex $S$ obtained from the boundary of the $(d+1)$-simplex by a sequence of bistellar $i$-moves, $0\leq i\leq k-1$, is a $k$-stacked $d$-sphere.	
	\label{lem:kstackbistellar}	
	\end{lemma}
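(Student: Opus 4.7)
The plan is to certify $S$ as $k$-stacked directly from Definition~\ref{def:kStackedSphere} by inductively constructing a $(d+1)$-ball $W$ with $\partial W = S$ whose only interior faces have dimension at least $d-k+1$. I start with $W_0 := \Delta^{d+1}$, so that $\partial W_0 = \partial \Delta^{d+1}$ and $W_0$ has no interior face of dimension $\leq d$ at all. Given $W_{n-1}$ together with the next bistellar $i$-move $\Phi_A$ on $\partial W_{n-1}$ -- specified by a $(d-i)$-face $A$ with $\lk_{\partial W_{n-1}}(A) = \partial B$ for an $i$-simplex $B \notin W_{n-1}$ -- I glue a fresh $(d+1)$-simplex $\sigma_n := A * B$ to $W_{n-1}$ by identifying $A * \partial B \subset \partial W_{n-1}$ with the corresponding subcomplex of $\partial \sigma_n$. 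Since $A*\partial B \cong \Delta^{d-i} * S^{i-1}$ is a PL $d$-ball sitting in $\partial W_{n-1}$, this is an elementary (anti-)shelling, so $W_n := W_{n-1} \cup \sigma_n$ is again a PL $(d+1)$-ball. A direct calculation shows $\partial W_n = (\partial W_{n-1} \setminus A * \partial B) \cup (\partial A * B) = \Phi_A(\partial W_{n-1})$, so after the full sequence $\partial W_N = S$.

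The crucial step is to control which faces of $W_n$ are interior. The simplices of $W_n \setminus W_{n-1}$ are precisely the faces of $\sigma_n$ containing $B$, i.e., $\sigma_n$ itself together with the faces $\alpha * B$ for $\alpha \subsetneq A$; each such $\alpha * B$ lies in $\partial A * B \subset \partial W_n$, so the only genuinely new interior face from this group is $\sigma_n$ (dimension $d+1$). The faces of $W_{n-1}$ that were on $\partial W_{n-1}$ but have become interior in $W_n$ are exactly those of $A*\partial B$ that contain $A$, namely $A$ itself and the faces $A*\gamma$ with $\emptyset \neq \gamma \subsetneq B$: by the link hypothesis, every boundary facet of $W_{n-1}$ containing $A$ must lie in the star $A*\partial B$, and no facet of $\partial A * B$ contains the full simplex $A$. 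Every such newly-interior face has dimension at least $\dim A = d - i$, and by the hypothesis $i \leq k-1$ this is at least $d - k + 1 > d - k$. Induction on $n$ then yields a $(d+1)$-ball $W = W_N$ with $\partial W = S$ and no interior $j$-face for $j \leq d-k$, so $S$ is $k$-stacked.

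The main obstacle is the interior-face bookkeeping in the previous paragraph -- in particular, ruling out that some low-dimensional face quietly becomes interior during a move. This is handled entirely by the link hypothesis $\lk_{\partial W_{n-1}}(A) = \partial B$, which forces every ``at risk'' face to contain the full simplex $A$ and thereby to have dimension at least $d - i$. The restriction $k \leq \lceil d/2 \rceil$ enters only to guarantee $i \leq k-1 \leq \lfloor d/2 \rfloor$, so that each move is genuinely an attachment of a $(d+1)$-simplex to $W_{n-1}$ rather than a reverse move which would require collapsing a simplex of $W_{n-1}$; this keeps the monotone inductive construction of $W$ well-defined throughout.
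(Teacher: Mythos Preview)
Your proof is correct and follows the same strategy as the paper's (much terser) argument: assemble the $(d+1)$-ball $W$ from the induced sequence of $(d+1)$-simplices $A*B$ and verify that no $j$-face with $j\le d-k$ is interior; your careful bookkeeping of which faces become interior is exactly what the paper summarizes in one line. One small sharpening: the clean reason that $B\notin W_{n-1}$ (so that the attachment is well-defined) is not the forward/reverse-move dichotomy but your own inductive conclusion---interior faces of $W_{n-1}$ have dimension at least $d-k+1>k-1\ge\dim B$ (this strict inequality is precisely $k\le\lceil d/2\rceil$), while $B\notin\partial W_{n-1}$ by the definition of a bistellar move.
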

	
	\begin{proof}
	As $k\leq \lceil \frac{d}{2}\rceil$, the sequence of $(d+1)$-simplices induced by the sequence of bistellar moves is duplicate free and defines a simplicial $(d+1)$-ball $B$ with $\partial B=S$. Furthermore, $\skel_{d-k}(B)=\skel_{d-k}(S)$ holds as no bistellar move in the sequence can contribute an inner $j$-face to $B$, $0\leq j\leq d-k$. Thus, $S$ is a $k$-stacked $d$-sphere.	
	\end{proof}
	
	Keep in mind though, that this interpretation does not hold for values $k > \lceil \frac{d}{2}\rceil$, as in this case the sequence of $(d+1)$-simplices induced by the sequence of bistellar moves may have duplicate entries, as opposed to the case with $k\leq \lceil \frac{d}{2}\rceil$. 
	
	In terms of bistellar moves, the minimally $2$-stacked sphere in Figure~\ref{fig:2stackeds2} can be constructed as follows: Start with a solid tetrahedron and stack another tetrahedron onto one of its facets (a $0$-move). Now introduce the inner diagonal $(5,6)$ via a bistellar $1$-move. Clearly, this complex is not bistellarly equivalent to the simplex by only applying reverse $0$-moves (and thus not ($1$-)stacked) but it is bistellarly equivalent to the simplex by solely applying reverse $0$-, and $1$-moves and thus minimally $2$-stacked.
	
	The author is one of the authors of the toolkit \texttt{simpcomp} \cite{simpcomp, simpcompISSAC} for simplicial constructions in the \texttt{GAP} system \cite{GAP4}. \texttt{simpcomp} contains a randomized algorithm that checks whether a given $d$-sphere is $k$-stacked, $k\leq \lceil \frac{d}{2}\rceil$, using the argument above.

	\begin{figure}
		\centering
			\includegraphics[height=3.2cm]{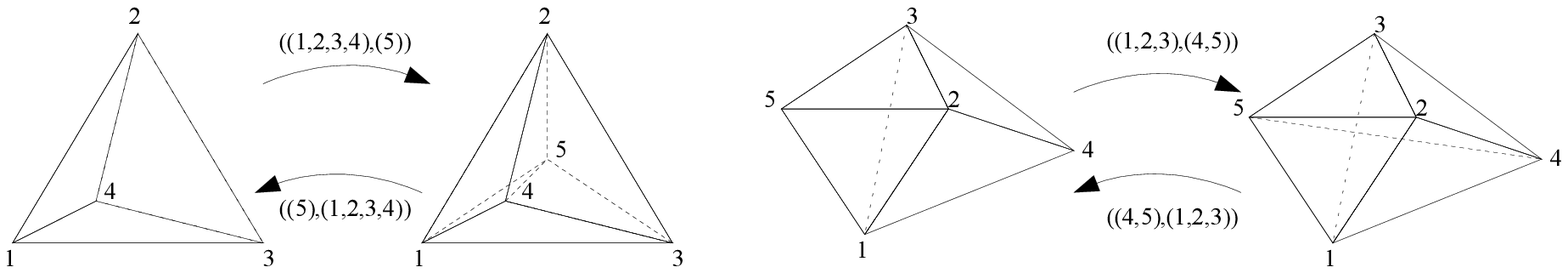}			
			\caption{Left: a $0$-move on a tetrahedron and its inverse $3$-move (i.\ e.\ a reverse $0$-move). Right: a $1$-move on two tetrahedra glued together at one triangle and its inverse $2$-move (i.e.\ a reverse $1$-move).}
            \label{fig:flips}
	\end{figure}	
		
	With the notion of $k$-stacked spheres at hand one can define the following generalization of Walkup's class $\mathcal{K}(d)$.
	
	\begin{definition}[the class $\mathcal{K}^k(d)$]
		Let $\mathcal{K}^k(d)$, $k\leq d$, be the family of all $d$-dimensional simplicial complexes all whose vertex links are $k$-stacked spheres. 
	\end{definition}
	
	Note that $\mathcal{K}^d(d)$ is the set of all triangulated manifolds for any $d$ and that Walkup's class $\mathcal{K}(d)$ coincides with $\mathcal{K}^1(d)$ above. In analogy to the $1$-stacked case, a $(k+1)$-neighborly member of $\mathcal{K}^k(d)$ with $d\geq 2k$ necessarily has vanishing $\beta_{1},\dots,\beta_{k-1}$. Thus, it seems reasonable to ask for the existence of a generalization of Kalai's Theorem~\ref{thm:KalaiKdConnected} to the class of $\mathcal{K}^k(d)$ for $k\geq 2$.

	Furthermore, one might be tempted to ask for a generalization of Theorem~\ref{thm:Kd2NeighborlyTight} to the class $\mathcal{K}^k(d)$ for $k\geq 2$. Unfortunately, there seems to be no direct way of generalizing Theorem~\ref{thm:Kd2NeighborlyTight} to also hold for members of $\mathcal{K}^k(d)$ giving a combinatorial condition for the tightness of such triangulations. The key obstruction here is the fact that a generalization of Lemma~\ref{lem:StackedSphereHomology} is impossible. While in the case of ordinary stacked spheres a bistellar $0$-move does not introduce inner simplices to the $(d-1)$-skeleton, the key argument in Lemma~\ref{lem:StackedSphereHomology}, this is not true for bistellar $i$-moves for $i\geq 1$.  

	Nonetheless, an analogous result to Theorem~\ref{thm:Kd2NeighborlyTight} should be true for such triangulations. 
	
	\begin{question}
		Let $d\geq 4$ and $2\leq k\leq \lfloor \frac{d+1}{2}\rfloor$ and let $M$ be a $(k+1)$-neighborly combinatorial manifold such that $M\in \mathcal{K}^k(d)$. Does this imply the tightness of $M$?
		\label{ques:kneightight}
	\end{question}

	\begin{remark}
		Note that all vertex links of $(k+1)$-neighborly members of $\mathcal{K}^k(d)$ are $k$-stacked, $k$-neighborly $(d-1)$-spheres. McMullen and Walkup \cite[Sect.~3]{McMullen71GeneralizedLBC} showed that there exist $k$-stacked, $k$-neighborly $(d-1)$-spheres on $n$ vertices for any $2\leq 2k\leq d< n$. Some examples of such spheres will be given in the following. The conditions of being $k$-stacked and $k$-neighborly at the same time is strong as the two conditions tend to exclude each other in the following sense: McMullen and Walkup showed that if a $d$-sphere is $k$-stacked and $k'$-neighborly with $k'>k$, then it is the boundary of the simplex. In that sense, the $k$-stacked $k$-neighborly spheres appear as the most strongly restricted non-trivial objects of this class: The conditions in Theorem~\ref{thm:Kd2NeighborlyTight} (with $k=1$) and in Question~\ref{ques:kneightight} are the most restrictive ones still admitting non-trivial solutions.
	\label{rem:stackedneigh}
		
	\end{remark}

	\begin{remark}
		Most recently, Bagchi and Datta \cite{Bagchi11StellSphereTightCritCombMnf} gave a negative answer to Question~\ref{ques:kneightight} in odd dimensions $d=2k+1$ \cite[Prop.~16]{Bagchi11StellSphereTightCritCombMnf}, but could almost prove the statement for $d\neq 2k + 1$ \cite[Prop.~20]{Bagchi11StellSphereTightCritCombMnf}.
	\end{remark}

	\begin{table}
	\caption{Some known tight triangulations and their membership in the classes $\mathcal{K}^k(d)$, cf. \cite{Kuehnel99CensusTight}, with \emph{$n$} denoting the number of vertices of the triangulation and \emph{nb.} its neighborliness.\label{tab:tighttrigkd}}

	\centering
	\begin{tabular}{l|l|l|l|l}
		{$d$}&{top.\ type}&{$n$}&{nb.}&{$k$}\\ \hline
		
		$4$&	$\mathbb{C}P^2$	&$9$	&$3$	&$2$\\ 
		$4$&	$K3$			&$16$	&$3$	&$2$\\ 
		$4$&	$(S^3\dtimes S^1)\#(\mathbb{C}P^2)^{\#5}$& $15$ &$2$ &$2$\\ 
		$5$&	$S^3\times  S^2$ &$12$   &$3$ &$2$\\ 
		$5$&	$SU(3)/SO(3)$ &$13$ &$3$ &$3$\\ 
		$6$&	$S^3\times S^3$ &$13$ &$4$ &$3$\\ 
	\end{tabular}

	\end{table}
	
	K\"uhnel and Lutz \cite{Kuehnel99CensusTight} gave an overview of the currently known tight triangulations. The statement of Question~\ref{ques:kneightight} holds for all the triangulations listed in \cite{Kuehnel99CensusTight}. Note that there even exist $k$-neighborly triangulations in $\mathcal{K}^k(d)$ that are tight and thus fail to fulfill the prerequisites of Question~\ref{ques:kneightight} (see Table~\ref{tab:tighttrigkd}).  

	Although we did not succeed in proving conditions for the tightness of triangulations lying in $\mathcal{K}^k(d)$, $k\geq 2$, these have nonetheless interesting properties that we will investigate upon in the following. Also, many known tight triangulations are members of these classes, as will be shown.	Our first observation is that the neighborliness of a triangulation is closely related to the property of being a member of $\mathcal{K}^k(d)$.
	
	\begin{lemma}
		Let $k\in \N$ and $M$ be a combinatorial $d$-manifold, $d\geq 2k$, that is a $(k+1)$-neighborly triangulation. Then $M\in \mathcal{K}^{d-k}(d)$.
		\label{lem:NeighbStacked}
	\end{lemma}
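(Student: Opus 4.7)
The plan is to prove the stronger vertex-wise assertion that for every $v \in V(M)$, the link $L := \operatorname{lk}_M(v)$ is a $(d-k)$-stacked $(d-1)$-sphere; since $v$ is arbitrary, this gives $M \in \mathcal{K}^{d-k}(d)$. The first step is to extract from the $(k+1)$-neighborliness of $M$ the $k$-neighborliness of $L$: for any $k$-subset $T \subseteq V(M) \setminus \{v\}$, the set $\{v\} \cup T$ has cardinality $k+1$ and is therefore a face of $M$, so $T$ is a face of $L$. Equivalently, $\operatorname{skel}_{k-1}(L)$ coincides with the complete $(k-1)$-skeleton of the simplex on $V(L) = V(M) \setminus \{v\}$.

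By Definition~\ref{def:kStackedSphere}, the remaining task is to exhibit a triangulated $d$-ball $B$ with $\partial B = L$ and no interior $j$-faces for $0 \leq j \leq k-1$. The pivotal combinatorial reduction is this: any such $B$ with $V(B) \subseteq V(L)$ automatically satisfies the stackedness condition. Indeed, any $j$-face $\sigma$ of $B$ with $0 \leq j \leq k-1$ is a subset of $V(B) \subseteq V(L)$ of size at most $k$, and the $k$-neighborliness of $L$ forces $\sigma \in L = \partial B$. Hence $\sigma$ is a boundary face, not an interior one, and $B$ is $(d-k)$-stacked by definition. So the whole problem reduces to producing a filling $d$-ball for $L$ that introduces no new vertices.

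The main obstacle is precisely this last step. The obvious candidate $\operatorname{st}_M(v) = v * L$ is a triangulated $d$-ball with $\partial = L$, but $v$ itself is an interior $0$-face of it, so $\operatorname{st}_M(v)$ is \emph{not} $(d-k)$-stacked for any $k \geq 1$ and a different filling must be constructed. Two natural strategies suggest themselves. The first is to start from $\operatorname{st}_M(v)$ and apply a sequence of bistellar $i$-moves with $i \geq 1$ (which introduce no new vertices and can be arranged to leave $\partial B = L$ invariant) that eliminate $v$ as an interior vertex; here the hypothesis $d \geq 2k$ supplies the dimensional slack one needs so that the moves do not inadvertently reintroduce interior faces of dimension $\leq k-1$. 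The second is to build $B$ abstractly: since $L$ is a PL $(d-1)$-sphere whose $(k-1)$-skeleton is already complete on $V(L)$, standard PL techniques---stellar subdivisions, pulling, or shellability---should triangulate a $d$-ball on $V(L)$ whose boundary is $L$. Either way, once such a $B$ is obtained the reduction of the previous paragraph finishes the proof, and since $v$ was arbitrary, every vertex link of $M$ is $(d-k)$-stacked.
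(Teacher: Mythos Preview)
Your reduction is exactly the paper's: pass to a vertex link $L=\lk_M(v)$, observe that it is $k$-neighborly, and note that any $d$-ball $B$ with $\partial B=L$ and $V(B)\subseteq V(L)$ is automatically $(d-k)$-stacked because every face of $B$ of dimension $\leq k-1$ already lies in $L=\partial B$. The paper's proof consists of precisely these observations plus one citation.

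The gap is that you never actually produce such a $B$. You rightly flag this as the ``main obstacle'' and then offer two strategies, but neither is carried through. The first---eliminating the cone point $v$ from $\operatorname{st}_M(v)$ by bistellar $i$-moves with $i\geq 1$---presupposes that such a sequence exists; this is not guaranteed by Pachner's theorem, since bistellar connectivity may require passing through triangulations with \emph{more} vertices before one can reduce, and the remark about $d\geq 2k$ providing ``dimensional slack'' does not address this. The second strategy appeals to ``standard PL techniques'' without naming one that works: arbitrary PL spheres need not be shellable, and pulling or stellar subdivision typically \emph{adds} vertices rather than avoiding them. The phrase ``should triangulate'' signals that no actual construction has been given.

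The paper closes exactly this step by citing Bagchi and Datta (their lower bound paper for normal pseudomanifolds) for the existence of a $d$-ball $B$ with $\partial B=\lk(v)$, implicitly on the same vertex set, after which the $k$-neighborliness of $\lk(v)$ forces $\skel_{k-1}(B)=\skel_{k-1}(\lk(v))$. So your outline is correct and matches the paper, but the one substantive step is left as a promissory note; to complete the argument you must either invoke that external result explicitly or supply a genuine construction of $B$.
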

	 
	\begin{proof}
		If $M$ is $(k+1)$-neighborly, then for any $v\in V(M)$, $\lk(v)$ is $k$-neighborly.
		As $\lk(v)$ is PL homeomorphic to $\partial \Delta^{d}$ (since $M$ is a combinatorial manifold) there exists a $d$-ball $B$ with $\partial B=\lk(v)$ (cf.\ \cite{Bagchi08LBTNormPseudoMnf}). Since $\lk(v)$ is $k$-neighborly, $\skel_{k-1}(B)=\skel_{k-1}(\lk(v))$.
				By Definition~\ref{def:kStackedSphere}, the link of every vertex $v\in V(M)$ then is $(d-k)$-stacked and thus $M\in \mathcal{K}^{d-k}(d)$.
			\end{proof}

	As pointed out in Section~\ref{sec:intro}, K\"uhnel \cite[Chap.~4]{Kuehnel95TightPolySubm} investigated $(k+1)$-neighborly triangulations of  $2k$-manifolds and showed that all these are tight triangulations. By Lemma~\ref{lem:NeighbStacked}, all their vertex links are  $k$-stacked spheres.

	\begin{corollary}
		Let $M$ be a $(k+1)$-neighborly (tight) triangulation of a $2k$-manifold. Then $M$ lies in $\mathcal{K}^k(2k)$.
	\end{corollary}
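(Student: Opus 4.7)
The plan is to obtain the corollary as an immediate specialization of Lemma~\ref{lem:NeighbStacked}. I would set $d=2k$, verify that the hypotheses of that lemma are satisfied, and read off the conclusion. The condition $d\geq 2k$ required by the lemma holds with equality, and the $(k+1)$-neighborliness of $M$ is assumed in the corollary; all other hypotheses (combinatorial $d$-manifold, $k\in\mathbb{N}$) are inherited from the statement or are present by assumption.

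Concretely, I would argue as follows. Since $M$ is a $(k+1)$-neighborly combinatorial $d$-manifold with $d=2k\geq 2k$, Lemma~\ref{lem:NeighbStacked} applies and yields $M\in\mathcal{K}^{d-k}(d)$. Substituting $d=2k$ gives $d-k=k$, so $M\in\mathcal{K}^k(2k)$, which is exactly the claim. The parenthetical word ``tight'' in the statement is not used in the argument; it is included only to emphasize that K\"uhnel's result (cited just before the corollary) guarantees that such triangulations are automatically tight, so the corollary describes an actual non-empty class of examples.

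There is essentially no obstacle, since the proof is a one-line application of the preceding lemma. The only place where care is needed is in confirming that the parameter matching is correct: the lemma's output class $\mathcal{K}^{d-k}(d)$ involves the index $d-k$, and one has to observe that at $d=2k$ this collapses to $\mathcal{K}^k(2k)$, so the $k$ in the lemma's hypothesis and the $k$ in the corollary's conclusion refer to the same integer. No further verification, calculation, or topological input is required.
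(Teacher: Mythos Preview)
Your proposal is correct and matches the paper's approach exactly: the paper states the corollary immediately after Lemma~\ref{lem:NeighbStacked}, with the preceding sentence ``By Lemma~\ref{lem:NeighbStacked}, all their vertex links are $k$-stacked spheres'' serving as the entire justification. Your observation that the parenthetical ``(tight)'' plays no role in the argument and merely records K\"uhnel's result is also in line with the paper's presentation.
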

	
	In particular, this holds for many vertex minimal (tight) triangulations of $4$-manifolds.

	\begin{corollary}
		The known examples of the vertex-minimal tight triangulation of a $K3$-surface with $f$-vector $f=(16, 120, 560, 720, 288)$ due to Casella and K\"uhnel \cite{Casella01TrigK3MinNumVert} and the unique vertex-minimal tight triangulation of $\C P^2$ with $f$-vector $f=(9, 36, 84, 90, 36)$ due to K\"uhnel \cite{Kuehnel83Uniq3Nb4MnfFewVert}, cf. \cite{Kuehnel83The9VertComplProjPlane} are $3$-neighborly triangulations that lie in $\mathcal{K}^2(4)$.		\label{corr:K3CP2Tight}
	\end{corollary}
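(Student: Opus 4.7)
The plan is to verify the corollary by a direct computation of neighborliness from the $f$-vectors, followed by an appeal to Lemma~\ref{lem:NeighbStacked} (or equivalently the immediately preceding corollary).

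First I would check $3$-neighborliness by comparing the $f_2$-entry of each triangulation with the total number $\binom{n}{3}$ of $3$-element vertex subsets. For the K3 surface triangulation on $n=16$ vertices, one has $\binom{16}{3}=560=f_2$, so every triple of vertices spans a $2$-face. For the K\"uhnel triangulation of $\mathbb{C}P^{2}$ on $n=9$ vertices, $\binom{9}{3}=84=f_2$, giving $3$-neighborliness. (As noted in the references cited in the statement, both of these $f$-vectors are established facts about the triangulations in question, so this step is just an arithmetic verification.)

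Having established $3$-neighborliness, I would apply Lemma~\ref{lem:NeighbStacked} with $k=2$ and $d=4$. The hypothesis $d\geq 2k$ holds with equality, so the lemma concludes $M\in \mathcal{K}^{d-k}(d)=\mathcal{K}^{2}(4)$ in both cases. Alternatively, since the K3 triangulation and the $\mathbb{C}P^{2}$ triangulation are $(k+1)$-neighborly tight triangulations of $2k$-manifolds with $k=2$, the corollary preceding the statement applies verbatim.

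There is essentially no obstacle here: the corollary is a direct reading of Lemma~\ref{lem:NeighbStacked} once the numerical $3$-neighborliness is confirmed. The only thing to be careful about is citing the correct references for the $f$-vectors (\cite{Casella01TrigK3MinNumVert} for K3 and \cite{Kuehnel83Uniq3Nb4MnfFewVert,Kuehnel83The9VertComplProjPlane} for $\mathbb{C}P^{2}$) so that the reader sees the two ingredients — the combinatorial data and the general lemma — being combined cleanly. No further geometric argument about the links themselves is required, since Lemma~\ref{lem:NeighbStacked} extracts their $(d-k)$-stackedness automatically from the PL manifold structure and the neighborliness of the ambient triangulation.
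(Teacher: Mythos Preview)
Your proposal is correct and follows exactly the paper's intended route: the corollary is stated without explicit proof as an immediate specialization of the preceding corollary (itself a direct consequence of Lemma~\ref{lem:NeighbStacked}), and your arithmetic check of $3$-neighborliness via $\binom{16}{3}=560$ and $\binom{9}{3}=84$ together with the application of that lemma with $k=2$, $d=4$ is precisely what the paper leaves implicit.
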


	\bigskip					
		
	Let us now shed some light on properties of members of $\mathcal{K}^2(6)$. First recall that there exists a \emph{Generalized Lower Bound Conjecture} (GLBC) due to McMullen and Walkup as an extension to the classical Lower Bound Theorem for triangulated spheres as follows.
	
	\begin{conjecture}[GLBC, cf. \cite{McMullen71GeneralizedLBC, Bagchi08LBTNormPseudoMnf}]
		For $d\geq 2k+1$, the face-vector $(f_0,\dots, f_d)$ of any triangulated 
		$d$-sphere $S$ satisfies
		\begin{equation}
			f_j\geq\left\{
			\begin{array}{ll}
				\sum_{i=-1}^{k-1} (-1)^{k-i+1} \binom{j-i-1}{j-k} \binom{d-i+1}{j-i} f_i,&\quad\text{if }k\leq j\leq d-k,\\ 
				
				\sum_{i=-1}^{k-1} (-1)^{k-i+1} \left[ \binom{j-i-1}{j-k} \binom{d-i+1}{j-i}\right.&\\ 
				-\binom{k}{d-j+1} \binom{d-i}{d-k+1}&\\ 
				\left.+\sum_{l=d-j}^{k+1} (-1)^{k-l} \binom{l}{d-j} \binom{d-i}{d-l+1}\right]f_i,&\quad\text{if }d-k+1\leq j\leq d.
			\end{array}
			\right.
		\end{equation}

	Equality holds here for any $j$ if and only if $S$ is a $k$-stacked $d$-sphere.
		\label{conj:GLBC}
	\end{conjecture}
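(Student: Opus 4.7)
The plan is to reformulate the conjectured inequality in the language of the $h$-vector. Recall the invertible linear relation $h_i = \sum_{j=0}^{i} (-1)^{i-j}\binom{d-j+1}{i-j}f_{j-1}$ between the face numbers and the $h$-numbers of a $d$-dimensional complex, and set $g_i := h_i - h_{i-1}$. For any triangulated $d$-sphere the Dehn--Sommerville equations give $h_i = h_{d-i}$. First I would verify that, after this substitution, the inequality stated for the range $k \le j \le d-k$ is equivalent to the single condition $g_{k+1}(S) \ge 0$, and that the more elaborate expression for $d-k+1 \le j \le d$ is exactly what the same inequality becomes once $h_i = h_{d-i}$ is used to eliminate $f_j$ with $j > d-k$ in favour of the lower-indexed face numbers. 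This is a bookkeeping step, essentially the same manipulation that reduces the classical Lower Bound Theorem ($k=1$) to $g_2 \ge 0$.

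Once the inequalities are reduced to $g_{k+1}(S) \ge 0$, the proof would invoke the $g$-theorem. For boundary complexes of simplicial $(d+1)$-polytopes this is the content of the Billera--Lee--Stanley theorem asserting that $(g_0, g_1, \dots, g_{\lfloor d/2\rfloor})$ is an M-sequence, so in particular non-negative. For arbitrary triangulated spheres it is exactly the (still open) $g$-conjecture, and this is the first principal obstacle: any complete proof of Conjecture~\ref{conj:GLBC} in the stated generality must pass through a non-negativity statement of $g$-conjecture type.

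For the equality characterization, the argument splits into two directions. The easy direction --- that every $k$-stacked $d$-sphere achieves equality --- should follow from a direct calculation. If $S = \partial B$ with $B$ a $k$-stacked $(d+1)$-ball, then the absence of interior $j$-faces of $B$ for $j \le d-k$, combined with McMullen's $h$-polynomial identity relating $h(B)$ and $h(S)$, forces $h_i(B) = 0$ for $i > k$ and hence $h_{k+1}(S) = h_k(S)$, i.e.\ $g_{k+1}(S) = 0$. Unwinding the $h$-to-$f$ translation then yields equality in the stated inequality for every $j$.

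The hard direction --- that equality in even a single $j$ forces $S$ to be $k$-stacked --- is the second major obstacle. The natural route is to work inside the Stanley--Reisner ring of $S$ modulo a linear system of parameters: the vanishing $g_{k+1}(S) = 0$ should force a rigidity in the associated graded module, from which one extracts a filling $(d+1)$-ball by iteratively coning off missing faces of codimension $\le k$ in a compatible way. This is known to be delicate already for $k=1$ (where it recovers Kalai's characterisation of members of $\mathcal{K}(d)$ with minimal face vector, cf.~Theorem~\ref{thm:KalaiKdConnected}), and in full generality it seems to require machinery genuinely beyond the Morse-theoretic and combinatorial tools developed in the preceding sections. Accordingly, the statement is formulated as a conjecture rather than a theorem here.
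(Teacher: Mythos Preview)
The paper does not prove this statement: it is explicitly labelled and treated as a \emph{conjecture} (Conjecture~\ref{conj:GLBC}), cited from McMullen--Walkup and Bagchi--Datta, and is only \emph{assumed} in order to derive Theorem~\ref{conj:2Neighborly6ManifoldFvec}. There is therefore no ``paper's own proof'' to compare your proposal against.

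Your write-up is not really a proof attempt but rather an (accurate) commentary on why the statement is recorded as a conjecture. Your reduction to $g_{k+1}(S)\ge 0$ via the $h$-vector and the Dehn--Sommerville relations is the standard reformulation, your identification of the $g$-conjecture for spheres as the obstruction to the inequality part is correct, and your remarks on the equality case (easy direction via $h$-vectors of $k$-stacked balls, hard direction requiring a structural argument beyond the tools of this paper) are also on target. Since you yourself conclude that ``the statement is formulated as a conjecture rather than a theorem here,'' your proposal and the paper are in agreement: neither claims a proof.
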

	
	The GLBC implies the following theorem for $d=6$, which is a $6$-dimensional analogue of Walkup's theorem \cite[Thm.~5]{Walkup70LBC34Mnf}, \cite[Prop.~7.2]{Kuehnel95TightPolySubm}, see also Swartz' Theorem 4.10 in \cite{Swartz08FaceEnumSpheresMnf}. 
	
	\begin{theorem}
		Assuming the validity of the Generalized Lower Bound Conjecture \ref{conj:GLBC}, for any combinatorial $6$-manifold $M$ the inequality
				\begin{equation}
			f_2(M)\geq 28\chi(M)-21 f_0 +6 f_1
			\label{eq:Ineqf2K26}
		\end{equation}
				holds. If $M$ is $2$-neighborly, then
				\begin{equation}
			f_2(M)\geq 28\chi(M)+3f_0(f_0-8)
			\label{eq:Ineqf2K26Neigh}
		\end{equation}
				holds. In either case equality is attained if and only if $M\in\mathcal{K}^2(6)$.
		\label{conj:2Neighborly6ManifoldFvec}
	\end{theorem}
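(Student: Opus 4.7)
The plan is to imitate Walkup's argument for $4$-manifolds by applying the GLBC locally to every vertex link of $M$ and then combining the resulting local inequalities with the Dehn--Sommerville relations for the closed $6$-manifold $M$.

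First I would apply Conjecture~\ref{conj:GLBC} with $k=2$, $j=2$, $d=5$ to the link $\lk(v)$ of each vertex $v\in V(M)$, which is a combinatorial $5$-sphere. A direct evaluation of the right-hand side gives
\begin{equation*}
f_2(\lk(v))\;\geq\; 35-15\,f_0(\lk(v))+5\,f_1(\lk(v)),
\end{equation*}
with equality if and only if $\lk(v)$ is $2$-stacked (equality part of Conjecture~\ref{conj:GLBC}). Summing over $v\in V(M)$ and using the standard incidence identities $\sum_{v}f_i(\lk(v))=(i+2)\,f_{i+1}(M)$ yields
\begin{equation*}
4\,f_3(M)\;\geq\; 35\,f_0(M)-30\,f_1(M)+15\,f_2(M),
\end{equation*}
with equality precisely when every vertex link is $2$-stacked, i.e.\ $M\in\mathcal{K}^2(6)$.

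The next step is to eliminate $f_3(M)$ using the Dehn--Sommerville equations for $M$. Writing out $\chi(\lk(\sigma))$ for faces $\sigma$ of dimensions $1,2,3,4$ (whose links are spheres of dimensions $4,3,2,1$ with known Euler characteristic), applying the same double-counting identities as above together with the top-dimensional relation $2\,f_5(M)=7\,f_6(M)$ and the Euler identity $\chi(M)=\sum_{i=0}^{6}(-1)^i f_i(M)$, gives a linear system from which $f_3,f_4,f_5,f_6$ can be solved in terms of $f_0,f_1,f_2$ and $\chi(M)$. The short computation produces
\begin{equation*}
f_3(M)\;=\;35\,f_0(M)-15\,f_1(M)+5\,f_2(M)-35\,\chi(M).
\end{equation*}
Substituting this into the summed GLBC inequality and dividing by $5$ gives $f_2(M)\geq 28\,\chi(M)-21\,f_0(M)+6\,f_1(M)$, proving (\ref{eq:Ineqf2K26}); the characterization of equality is inherited directly from the GLBC.

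For the $2$-neighborly version, one has $f_1(M)=\binom{f_0(M)}{2}$, and inserting this into the just-derived bound and simplifying yields $f_2(M)\geq 28\,\chi(M)+3\,f_0(M)(f_0(M)-8)$, which is (\ref{eq:Ineqf2K26Neigh}), again with equality iff $M\in\mathcal{K}^2(6)$. The only genuinely laborious part of the argument is the bookkeeping in the Dehn--Sommerville elimination; everything else is a routine adaptation of the $4$-dimensional case, once Conjecture~\ref{conj:GLBC} is granted.
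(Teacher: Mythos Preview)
Your proposal is correct and follows essentially the same route as the paper's proof: apply the GLBC with $k=2$ to each vertex link of $M$, sum over all vertices via the double-counting identities, and combine with the Dehn--Sommerville relation $35f_0-15f_1+5f_2-f_3=35\chi(M)$ to obtain (\ref{eq:Ineqf2K26}); the $2$-neighborly version then follows by substituting $f_1=\binom{f_0}{2}$. The equality characterization via the GLBC's equality case is also handled identically.
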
	
	
	\begin{proof}
		Clearly,
				\begin{equation}
			f_3(M)=\frac{1}{4}\sum_{v\in V(M)} f_2(\lk(v)).
			\label{eq:Eqf3f2}
		\end{equation}
				By applying the GLBC \ref{conj:GLBC} to all the vertex links of $M$ one obtains a lower bound on $f_2(\lk(v))$ for all $v\in V(M)$:
								\begin{equation}
			f_2(\lk(v))\geq 35 -15f_0(\lk(v)) +5 f_1(\lk(v)).
			\label{eq:Ineq2Stacked5Sphere}
		\end{equation}
				Here equality is attained if and only if $\lk(v)$ is $2$-stacked. Combining (\ref{eq:Eqf3f2}) and (\ref{eq:Ineq2Stacked5Sphere}) yields a lower bound
				\begin{equation}
			\begin{array}{l@{}l@{}l}
			f_3(M)&\geq&\frac{1}{4}\sum_{v\in V(M)} 35 -15f_0(\lk(v)) +5 f_1(\lk(v))\\ 
			&=&\frac{5}{4}\left(7 f_0(M) - 6 f_1(M) + 3 f_2(M)\right),
			\end{array}
			\label{eq:Ineq5manif3}
		\end{equation}
				for which equality holds if and only if $M\in \mathcal{K}^2(6)$.
	
				If we eliminate $f_4$, $f_5$ and $f_6$ from the Dehn-Sommerville-equations for
		combinatorial $6$-manifolds, we obtain the linear equation
				\begin{equation}
					35f_0(M)-15f_1(M)+5f_2(M)-f_3(M)=35\chi(M).
			\label{eq:DS6Manifolds}
		\end{equation}

		Inserting inequality (\ref{eq:Ineq5manif3}) into (\ref{eq:DS6Manifolds}) and solving for $f_2(M)$ yields the claimed lower bounds (\ref{eq:Ineqf2K26}) and (\ref{eq:Ineqf2K26Neigh}),
				\begin{equation}
					\begin{array}{l@{}l@{}l}
				f_2(M)&\geq&28\chi(M) - 21 f_0(M) + 6 f_1(M)\\ 
				&=&28\chi(M)+3f_0(\underbrace{f_0(M)-8}_{\geq 0}),
			\end{array}
			\label{eq:f2stacked6mnf}
		\end{equation}
				where the $2$-neighborliness of $M$ was used in the last line.
	\end{proof}
	
	For a possible $14$-vertex triangulation of $S^4\times S^2$ (with $\chi=4$) inequality (\ref{eq:f2stacked6mnf}) becomes
		\begin{equation*}
		f_2\geq 4\cdot 28 +3\cdot 14\cdot (14-8)=364,
	\end{equation*}
		but together with the trivial upper bound $f_2 \leq \binom{f_0}{3}$ this already would imply that such a triangulation necessarily is $3$-neighborly, as $\binom{14}{3}=364$.
	
	So, just by asking for a $2$-neighborly combinatorial $S^4\times S^2$ on $14$ vertices that lies in $\mathcal{K}^2(6)$ already implies that
	this triangulation is $3$-neighborly. 	Also, the example would attain equality in the Brehm-Kühnel bound \cite{Brehm87CombMnfFewVert} as an example of a $1$-connected $6$-manifold with $14$ vertices. We strongly conjecture that this triangulation also would be tight, see Question~\ref{ques:kneightight}.

\section{Subcomplexes of the cross polytope}
\label{sec:SubcomplCross}

The $d$-dimensional cross polytope (or $d$-octahedron) $\beta^d$ is defined as the convex hull of the $2d$ points 

\begin{equation*}
	(0,\ldots,0,\pm1,0,\ldots,0) \in \mathbb{R}^d.
\end{equation*}

It is a simplicial and regular polytope and it is centrally-symmetric with $d$ missing edges called \emph{diagonals}, each between two antipodal points of type $(0,\ldots,0,1,0,\ldots,0)$ and $(0,\ldots,0,-1,0,\ldots,0)$. Its edge graph is the complete $d$-partite graph with two vertices in each partition, sometimes denoted by $K_2 * \cdots * K_2$. See \cite{McMullen02AbstrRegPolytopes, Gruenbaum03ConvPoly, Ziegler95LectPolytopes} for properties of regular polytopes in general. 

The boundary of the $(d+1)$-cross polytope $\beta^{d+1}$ is an obviously minimally $d$-stacked $d$-sphere as it can be obtained as the boundary of a minimally $d$-stacked $(d+1)$-ball that is given by any subdivision of $\beta^{d+1}$ along an inner diagonal.

As pointed out in Section~\ref{sec:intro}, centrally symmetric analogues of tight triangulations appear as Hamiltonian subcomplexes of cross polytopes. A \emph{centrally symmetric triangulation} is a triangulation such that there exists a combinatorial involution operating on the face lattice of the triangulation without fixed points. Any centrally symmetric triangulation thus has an even number of vertices and can be interpreted as a subcomplex of some higher dimensional cross polytope. The tightness of a centrally symmetric $(k-1)$-connected $2k$-manifold $M$ as a subcomplex of $\beta^d$ then is equivalent to $M$ being a $k$-Hamiltonian subcomplex of $\beta^d$, i.e. that $M$ is nearly $(k+1)$-neighborly, see \cite[Ch.~4]{Kuehnel95TightPolySubm}.

As it turns out, all of the known centrally symmetric triangulations of $d$-manifolds that are $k$-Hamiltonian subcomplexes of a higher dimensional cross polytope $\beta^N$ and admit a tight embedding into $\beta^N$ are members of the class $\mathcal{K}^k(d)$. This will be discussed in the following paragraphs.

\begin{corollary}
	The $16$-vertex triangulation of $(S^2\times S^2)^{\#7}_{16}$ presented in \cite{Effenberger08HamSubRegPoly} is contained in $\mathcal{K}^2(4)$ and admits a tight embedding into $\beta^8$ as shown in \cite{Effenberger08HamSubRegPoly}. 	\label{corr:s2s27}
\end{corollary}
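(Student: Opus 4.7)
The tight embedding part of the corollary is established directly in \cite{Effenberger08HamSubRegPoly}, so the only thing that needs to be verified here is that $M := (S^2\times S^2)^{\#7}_{16}$ lies in $\mathcal{K}^2(4)$, i.e., that each vertex link $\lk_M(v)$ (a PL $3$-sphere) is $2$-stacked.

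Since $2 = \lceil 3/2 \rceil$, Lemma~\ref{lem:kstackbistellar} applies in a useful direction and provides a workable criterion: a combinatorial $3$-sphere is $2$-stacked if and only if it can be obtained from $\partial\Delta^4$ by a sequence of bistellar $0$- and $1$-moves (equivalently, it admits a sequence of reverse $0$- and $1$-moves to $\partial\Delta^4$). This reduces the verification to a finite combinatorial test performed separately on each of the $16$ vertex links of $M$.

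The plan is to carry out this test using the randomized reduction algorithm of \texttt{simpcomp} \cite{simpcomp, simpcompISSAC} described in Section~\ref{sec:Kkd}. Starting from the explicit facet list of $M$ recorded in \cite{Effenberger08HamSubRegPoly}, one extracts $\lk_M(v)$ for every $v\in V(M)$ and attempts to reduce it to $\partial\Delta^4$ using only reverse $0$- and $1$-moves. The workload can be roughly halved by exploiting the central symmetry of the embedding into $\beta^8$: antipodal vertices have combinatorially isomorphic links, so only one representative per antipodal pair needs to be inspected.

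The main obstacle is that the argument is essentially computational, as no obvious structural feature of the construction in \cite{Effenberger08HamSubRegPoly} forces $2$-stackedness a priori. The reduction algorithm is randomized and may fail to discover an existing reducing sequence on a single run, so one must allow several restarts with different random seeds; this is unproblematic in practice because each $3$-sphere link has only moderate size. Once every vertex link has been successfully reduced, the membership $M\in\mathcal{K}^2(4)$ is established, and combined with the tight embedding from \cite{Effenberger08HamSubRegPoly} the corollary follows.
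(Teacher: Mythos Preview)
Your proposal is correct and follows essentially the same approach as the paper: cite \cite{Effenberger08HamSubRegPoly} for the tight embedding, then verify computationally via \texttt{simpcomp} that each vertex link can be reached from $\partial\Delta^4$ by bistellar $0$- and $1$-moves, invoking Lemma~\ref{lem:kstackbistellar} to conclude $2$-stackedness. One minor point: Lemma~\ref{lem:kstackbistellar} as stated only gives the ``if'' direction, not the ``if and only if'' you claim, but since you only use the ``if'' direction this does not affect the argument.
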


\begin{proof}
	The triangulation $(S^2\times S^2)^{\#7}_{16}$ is a combinatorial manifold and a tight subcomplex of $\beta^8$ as shown in \cite{Effenberger08HamSubRegPoly}. Thus, each vertex link is a PL $3$-sphere. It remains to show that all vertex links are $2$-stacked.
	
	Using \texttt{simpcomp}, we found that the vertex links can be obtained from the boundary of a $4$-simplex by a sequence of $0$- and $1$-moves. Therefore, by Lemma~\ref{lem:kstackbistellar}, the vertex links are $2$-stacked $3$-spheres. Thus, $(S^2\times S^2)^{\#7}_{16}\in \mathcal{K}^2(4)$, as claimed. 	
\end{proof}

The following centrally symmetric triangulation of $S^4\times S^2$ is a new example of a triangulation that can be seen as a subcomplex of a higher dimensional cross polytope. 
\begin{theorem}
	There exists an example of a centrally symmetric triangulation $M^6_{16}$ of $S^4\times S^2$ with $16$ vertices that is a $2$-Hamiltonian subcomplex of the $8$-octahedron $\beta^8$ and that is member of $\mathcal{K}^2(6)$. 	\label{thm:ExampleS2S4Beta8}
\end{theorem}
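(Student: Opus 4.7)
The plan is to exhibit the triangulation $M^6_{16}$ explicitly by a list of facets (best constructed and stored as a \texttt{simpcomp} object) and then verify the claimed properties one after another, the construction being done with the help of a computer search among centrally symmetric $2$-Hamiltonian subcomplexes of $\beta^8$. First I would fix a free involution $\sigma$ on a $16$-element vertex set, interpret this as the antipodal identification in $\beta^8$, and specify $M^6_{16}$ as a $\sigma$-invariant set of $7$-tuples of vertices none of which contains a diagonal pair $\{v,\sigma(v)\}$. Central symmetry is then immediate from the $\sigma$-invariance of the facet list, and the fact that $M^6_{16}$ is a subcomplex of $\beta^8$ follows from the absence of diagonals in its facets.

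Next I would verify the $2$-Hamiltonicity of $M^6_{16}$ in $\beta^8$: this is a finite combinatorial check, namely that the $2$-skeleton of $M^6_{16}$ coincides with the $2$-skeleton of $\beta^8$, i.e.\ contains all $\binom{16}{3}-16\cdot 14 = 560 - 224$ non-diagonal triangles. In particular, $M^6_{16}$ is $2$-neighborly and nearly $3$-neighborly as a subcomplex of $\beta^8$.

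The topological identification is the main obstacle. To show that $|M^6_{16}|\cong S^4\times S^2$ I would proceed as follows. First, verify combinatorial manifoldness by checking that each of the $16$ vertex links is a PL $5$-sphere; using \texttt{simpcomp}, this is done by reducing each link to $\partial\Delta^6$ via a sequence of bistellar moves. Second, compute the (integer) simplicial homology and verify that $H_*(M^6_{16})\cong H_*(S^4\times S^2)$, and check orientability. Third, verify simple connectivity of $M^6_{16}$ from the $2$-skeleton (which is inherited from $\beta^8$, hence highly connected). Simple connectivity together with the homology type, combined with the classification of simply connected closed $6$-manifolds (Wall/Jupp), then pins down the homeomorphism type as $S^4\times S^2$; alternatively one can search for an explicit bistellar reduction connecting $M^6_{16}$ to a known triangulation of $S^4\times S^2$.

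Finally, membership in $\mathcal{K}^2(6)$ is verified by showing that each vertex link $\lk(v)$ is a $2$-stacked $5$-sphere. By Lemma~\ref{lem:kstackbistellar}, since $2\leq \lceil 5/2\rceil = 3$, it suffices to exhibit, for each $v$, a sequence of bistellar $i$-moves with $i\in\{0,1\}$ transforming $\partial\Delta^6$ into $\lk(v)$. Again I would let \texttt{simpcomp} search for such sequences; by central symmetry one in fact only needs to check $8$ of the $16$ links. This step is routine once a random bistellar-flip strategy succeeds, and its success provides the desired certificate that $M^6_{16}\in\mathcal{K}^2(6)$, completing the proof.
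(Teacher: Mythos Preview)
Your verification of central symmetry, the subcomplex property, $2$-Hamiltonicity, and membership in $\mathcal{K}^2(6)$ all match the paper's approach (including the use of Lemma~\ref{lem:kstackbistellar} for the links; the paper does not note the symmetry shortcut to $8$ links, but it is valid). One small slip: the number of triangles of $\beta^8$ is $\binom{16}{3}-8\cdot 14=448$, not $\binom{16}{3}-16\cdot 14$; indeed $f_2(M^6_{16})=448$ in the paper's $f$-vector.

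Where you genuinely diverge from the paper is in how the complex is produced and how the topological type is established. The paper does \emph{not} search inside $\beta^8$; it starts from the $24$-vertex simplicial product $\partial\Delta^3\times\partial\Delta^5$, which is a combinatorial $S^4\times S^2$ by construction, and then reduces to $16$ vertices by bistellar flips. This has the advantage that PL-homeomorphism to $S^4\times S^2$ comes for free, and one only has to observe afterwards that the result happens to carry a fixed-point-free involution with the eight required missing edges. Your route---find a candidate in $\beta^8$ and then recognise the topology---is legitimate, but the step ``homology type plus simple connectivity plus Wall/Jupp pins down $S^4\times S^2$'' is not quite enough as stated: the classification of simply connected closed $6$-manifolds needs the cohomology \emph{ring} and characteristic-class data (e.g.\ $w_2$, $p_1$), not just $H_*$, to separate $S^4\times S^2$ from other candidates with the same Betti numbers. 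Your fallback, an explicit bistellar path to a known triangulation of $S^4\times S^2$, is exactly the paper's method run in reverse and is the cleaner certificate.
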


\begin{proof}
	The construction of $M^6_{16}$ was done entirely with \texttt{simpcomp} and is as follows. First, a $24$-vertex triangulation $\tilde{M}^6$ of $S^4\times S^2$ was constructed as the standard simplicial cartesian product of $\partial \Delta^3$ and $\partial\Delta^5$ as implemented in \cite{simpcomp}, where $\Delta^d$ denotes the $d$-simplex. Then $\tilde{M}^6$ obviously is a combinatorial $6$-manifold homeomorphic to $S^4\times S^2$.
	
	This triangulation $\tilde{M}^6$ was then reduced to the triangulation $M^6_{16}$ with $f$-vector  $f=(16, 112, 448, 980, 1232, 840, 240)$ using a vertex reduction algorithm based on bistellar flips that is implemented in \cite{simpcomp}. The code is based on the vertex reduction methods developed by Bj\"orner and Lutz \cite{Bjoerner00SimplMnfBistellarFlips}. It is well-known that this reduction process leaves the PL type of the triangulation invariant, such that $M^6_{16}\cong S^4\times S^2$ holds. 
		The $f$-vector of $M^6_{16}$ is uniquely determined already by the condition of $M^6_{16}$ to be $2$-Hamiltonian in the $8$-dimensional cross polytope.
		In particular, $M^6_{16}$ has $8$ missing edges of the form $\langle i,\,i+1\rangle$ for all odd $1\leq i \leq 15$, which are pairwise disjoint and correspond to the $8$ diagonals of the cross polytope.
		As there is an involution $$I=(1,2)(3,4)(5,6)(7,8)(9,10)(11,12)(13,14)(15,16)$$ operating on the faces of $M^6_{16}$ without fixed points, $M^6_{16}$ can be seen as a $2$-Hamiltonian subcomplex of $\beta^8$. Apart from $I$, $M^6_{16}$ has no non-trivial symmetries, i.e we have $\text{Aut}(M^{6}_{16})=\langle I\rangle \cong C_2$. The $240$ facets of $M^6_{16}$ are given in Table~\ref{tab:S2S4CentSymmSimplices}.  
	
		It remains to show that $M^6_{16}\in \mathcal{K}^2(6)$. 		Remember, that the necessary and sufficient condition for a triangulation $X$ to be member of $\mathcal{K}^k(d)$ is that all vertex links of $X$ are $k$-stacked $(d-1)$-spheres.
		Since $M^6_{16}$ is a combinatorial $6$-manifold, all vertex links are PL $5$-spheres. It thus suffices to show that all vertex links are $2$-stacked.
		
		Using \texttt{simpcomp}, we found that the vertex links can be obtained from the boundary of the $6$-simplex by a sequence of $0$- and $1$-moves. Therefore, by Lemma~\ref{lem:kstackbistellar}, vertex links are $2$-stacked $5$-spheres. Thus, $M^6_{16}\in \mathcal{K}^2(6)$, as claimed.
		
	\end{proof}

The triangulation $M^6_{16}$ is strongly conjectured to be tight in $\beta^8$. It is part of a conjectured series of centrally symmetric triangulations of sphere products for which tight embeddings into cross polytopes are conjectured. In some cases, the tightness of the embedding could be proved, see \cite{Sparla99LBTComb2kMnf}, \cite[6.2]{Kuehnel99CensusTight} and \cite[Sect.~6]{Effenberger08HamSubRegPoly}.
In particular, the sphere products presented in \cite[Thm.~6.3]{Kuehnel99CensusTight} are part of this conjectured series and the following holds.

\begin{theorem}
	The centrally symmetric triangulations of sphere products of the form $S^k\times S^m$ with vertex transitive automorphism groups
		\begin{equation*}
		\begin{array}{lllllll}
			S^1\times S^1, & S^2\times S^1, & S^3\times S^1, & S^4\times S^1, & S^5\times S^1, & S^6\times S^1, & S^7\times S^1, \\ 
			& & S^2\times S^2, & S^3\times S^2, & & S^5\times S^2, & \\ 
			& & & & S^3\times S^3, & S^4\times S^3, & S^5\times S^3, \\ 
			& & & & & & S^4\times S^4
		\end{array}
	\end{equation*}
		on $n=2(k+m)+4$ vertices presented in \cite[Thm.~6.3]{Kuehnel99CensusTight} all lie in the class $\mathcal{K}^{\min\set{k,m}}(k+m)$.
	\label{thm:centrsymmseries}
\end{theorem}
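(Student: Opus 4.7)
Set $\kappa := \min\{k,m\}$ and $d := k+m$, so that $\kappa \le \lceil d/2 \rceil$ and the hypothesis of Lemma~\ref{lem:kstackbistellar} is satisfied. Since each of the triangulations listed in \cite[Thm.~6.3]{Kuehnel99CensusTight} is a combinatorial $d$-manifold, every vertex link is automatically a PL $(d-1)$-sphere; the content of the theorem is therefore precisely that every such link is $\kappa$-stacked. The plan is to proceed completely analogously to the proof of Theorem~\ref{thm:ExampleS2S4Beta8}, handling the finitely many triangulations in the list one by one.

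First I would write down each triangulation $M = M(k,m)$ explicitly, either from the facet orbits under the cyclic/dihedral groups given in \cite{Kuehnel99CensusTight} or (if preferred) by reconstructing them as difference cycles. Because the automorphism group is vertex transitive by assumption, it suffices to verify $\kappa$-stackedness for a single vertex link $L := \lk_M(v)$ in each case, which reduces the problem to at most one computation per row of the table in the theorem statement. The link $L$ is a centrally symmetric triangulated $(d-1)$-sphere, realized as a subcomplex of $\lk_{\beta^{d+2}}(v) \cong \beta^{d+1}$, and contains the full $(\kappa-1)$-skeleton of $\beta^{d+1}$, inherited from the $\kappa$-Hamiltonicity (equivalently, near $(\kappa+1)$-neighborliness) of $M$ as a subcomplex of $\beta^{d+2}$.

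To verify that $L$ is $\kappa$-stacked I would invoke Lemma~\ref{lem:kstackbistellar}: since $\kappa \le \lceil d/2\rceil$, it suffices to exhibit a sequence of bistellar $i$-moves with $0 \le i \le \kappa - 1$ transforming $\partial \Delta^{d}$ into $L$ (equivalently, $L$ into $\partial\Delta^d$ by reverse moves). This search can be carried out using the randomized bistellar-flip strategy of \cite{Bjoerner00SimplMnfBistellarFlips}, restricted to the allowed move range, as implemented in \texttt{simpcomp} \cite{simpcomp}, in exactly the same manner as for the $16$-vertex link computation in Theorem~\ref{thm:ExampleS2S4Beta8} and Corollary~\ref{corr:s2s27}. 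Central symmetry of $L$ can additionally be exploited to halve the combinatorial search space.

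The main obstacle is that, in contrast to the $\kappa = 1$ case treated by Kalai's Theorem~\ref{thm:KalaiKdConnected}, no structural characterization of $\kappa$-stacked spheres for $\kappa \ge 2$ is available, so there is no obvious uniform, family-wide argument and one is forced into a case-by-case bistellar-flip verification. A conceptually cleaner route would be to prove the general statement that every vertex link in a centrally symmetric $\kappa$-Hamiltonian subcomplex of a cross polytope is automatically $\kappa$-stacked; such a result, however, would be considerably stronger than Question~\ref{ques:kneightight} and seems out of reach by the present methods. For the higher-dimensional entries in the list (such as $S^5\times S^3$ or $S^4\times S^4$ on $n=20$ vertices) the search space is the primary practical difficulty, and here I would exploit the large symmetry group of $L$ to prune the random walk.
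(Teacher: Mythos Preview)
Your plan is correct and matches the paper's approach essentially verbatim: the paper also reduces to a single vertex link via vertex transitivity, then uses \texttt{simpcomp} to find a sequence of bistellar $i$-moves, $0\le i\le \min\{k,m\}-1$, from $\partial\Delta^{k+m}$ to the link and concludes via Lemma~\ref{lem:kstackbistellar}. The additional remarks you make about exploiting central symmetry and about the lack of a structural characterization for $\kappa\ge 2$ are reasonable context but go beyond what the paper records.
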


Using \texttt{simpcomp}, we found that the vertex links of all the manifolds mentioned in the statement can be obtained from the boundary of a $(k+m)$-simplex by sequences of bistellar $i$-moves, $0\leq i\leq \text{min}\{k,l\}-1$. Therefore, by Lemma~\ref{lem:kstackbistellar}, the vertex links are $\min\set{k,m}$-stacked $(k+m-1)$-spheres. Thus, all the manifolds mentioned in the statement are in $\mathcal{K}^{\min\set{k,m}}(k+m)$. Note that since these examples all have a transitive automorphism group, it suffices to check the stackedness condition for one vertex link only.

The preceding observations naturally lead to the following Question~\ref{ques:kHamilKkdTight} as a generalization of Question~\ref{ques:kneightight}.

\begin{question}
	Let $d\geq 4$ and let $M$ be a $k$-Hamiltonian codimension $2$ subcomplex of the $(d+2)$-dimensional cross polytope $\beta^{d+2}$ such that $M\in \mathcal{K}^k(d)$ for some fixed $1\leq k\leq \lceil \frac{d-1}{2} \rceil$. Does this imply that the embedding $M\subset \beta^{d+2}\subset E^{d+2}$ is tight?
 	\label{ques:kHamilKkdTight}
\end{question}

This is true for all currently known codimension $2$ subcomplexes of cross polytopes that fulfill the prerequisites of Question~\ref{ques:kHamilKkdTight}: The $8$-vertex triangulation of the torus, a $12$-vertex triangulation of $S^2\times S^2$ due to Sparla \cite{Lassmann00ClassCentSymmCycS2S2, Sparla97GeomKombEigTrigMgf}
and the triangulations of $S^k\times S^k$ on $4k+4$ vertices for $k=3$ and $k=4$ as well as for the infinite series of triangulations of $S^k\times S^1$ in \cite{Kuehnel86HigherDimCsaszar}. For the other triangulations of $S^k\times S^m$ listed in Theorem~\ref{thm:centrsymmseries} above, Kühnel and Lutz ``strongly conjecture'' \cite[Sec.~6]{Kuehnel99CensusTight} that they are tight in the $(k+m+2)$-dimensional cross polytope. Nevertheless, it is currently not clear whether the conditions of Question~\ref{ques:kHamilKkdTight} imply the tightness of the embedding into the cross polytope.

In accordance with \cite[Conjecture 6.2]{Kuehnel99CensusTight} we then have the following conjecture.

\begin{conjecture}
	Any centrally symmetric combinatorial triangulation $M^{k+m}_n$ of $S^k\times S^m$ on $n=2(k+m+2)$ vertices is tight if regarded as a subcomplex of the $\frac{n}{2}$-dimensional cross polytope. $M^{k+m}_n$ is contained in the class $\mathcal{K}^{\min\set{k,m}}(k+m)$.
\end{conjecture}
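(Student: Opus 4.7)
The plan is to split the conjecture into a combinatorial part (membership in $\mathcal{K}^{\min\set{k,m}}(k+m)$) and a tightness part, and to address them in that order. Without loss of generality assume $k\le m$, and put $p:=k$, $d:=k+m$, so that $M$ sits inside $\beta^{d+2}$ on $n=2(d+2)$ vertices; the target Betti numbers are $\beta_0=\beta_k=\beta_m=\beta_{k+m}=1$ (with $\beta_k$ doubled if $k=m$) and zero otherwise. The overall strategy is first to use the extremal vertex count to force $M$ to be $p$-Hamiltonian in $\beta^{d+2}$, then to read off $p$-stackedness of all vertex links, and finally to combine these facts with the Morse-theoretic machinery of Section~\ref{sec:PolarMorseTight} in the spirit of Theorem~\ref{thm:Kd2NeighborlyTight}. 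Note that for $k=m$ this case reduces to the $(k-1)$-connected $2k$-manifold criterion already discussed in the excerpt, so the interesting new content is the case $k<m$.

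For the combinatorial part, the vertex count $n=2(d+2)$ coincides with the extremal value in the centrally symmetric generalized Heawood inequalities conjectured by Sparla \cite{Sparla99LBTComb2kMnf} and partially settled by Novik \cite{Novik05OnFNumMnfSymm}, and the characterization of the equality case yields that $M$ contains every $(p+1)$-subset of vertices avoiding both endpoints of any diagonal, i.e.\ $M$ is $p$-Hamiltonian in $\beta^{d+2}$. Each link $\lk_M(v)$ is then a PL $(d-1)$-sphere and a $(p-1)$-Hamiltonian subcomplex of $\lk_{\beta^{d+2}}(v)\iso\beta^{d+1}$. To upgrade this to $p$-stackedness, I would show inductively that $\lk_M(v)$ can be obtained from $\partial\Delta^{d}$ by a sequence of bistellar $i$-moves with $0\le i\le p-1$, and then invoke Lemma~\ref{lem:kstackbistellar}. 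A natural induction peels off diagonal pairs of $\beta^{d+1}$ one at a time, assigning to each pair a single bistellar move of index at most $p-1$ on an intermediate $(p-1)$-Hamiltonian subcomplex of a smaller cross polytope; the base case is the boundary of a cross polytope, which is minimally $(d-1)$-stacked. This would replace the computer-assisted verification of the analogous statement in Theorem~\ref{thm:centrsymmseries} by a conceptual argument.

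For the tightness part, the proof parallels that of Theorem~\ref{thm:Kd2NeighborlyTight}: for any rsl function $f$ on $M$ and any $v\in V(M)$, excision yields $H_{*}(M_v,M_v\setminus\set{v})\iso H_{*}(M_v\cap\str(v),M_v\cap\lk(v))$, and contractibility of the star together with the long exact sequence of the pair gives $H_{d-i}(M_v,M_v\setminus\set{v})\iso H_{d-i-1}(M_v\cap\lk(v))$ for $1<i<d-1$. The key input is a $p$-stacked analogue of Lemma~\ref{lem:StackedSphereHomology} stating that for a $p$-stacked $(d-1)$-sphere $S$ and any $V'\subseteq V(S)$ one has $H_{j}(\Span_S(V'))=0$ for $p\le j\le d-2-p$. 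Granting this, $\mu_i(f)=0$ in the middle range $p<i<d-p$. The vanishing in the low range $0<i<p$ should follow from the $p$-Hamiltonicity of $M$ in $\beta^{d+2}$, which enforces a cross-polytope analogue of $(p-1)$-tightness (any $(p+1)$-subset of vertices avoiding antipodal pairs spans a simplex, so spans of half-space intersections have vanishing low-dimensional homology), and the vanishing in the high range $d-p<i<d$ follows via the symmetry $\mu_i(-f)=\mu_{d-i}(f)$ together with Poincar\'e duality on the orientable manifold $M$. A lacunary argument in the spirit of Theorem~\ref{thm:MorseTight}, extended to allow non-zero Morse numbers in the intermediate dimensions $i=k,m$ matching the Betti numbers of $S^k\times S^m$, then concludes tightness.

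The principal obstacle is the $p$-stacked homological lemma: the original proof of Lemma~\ref{lem:StackedSphereHomology} exploited that bistellar $0$-moves preserve the full $(d-2)$-skeleton, whereas bistellar $i$-moves with $i\ge 1$ both create and destroy faces in the relevant codimension range, so spans of vertex subsets are no longer preserved by the inductive step. Handling this likely requires a refined homological invariant tracking only dimensions $p,\dots,d-2-p$ and exploits the fact that the $p$-stacked spheres appearing here are not arbitrary, but arise as $(p-1)$-Hamiltonian subcomplexes of cross polytopes, a constraint which should substantially restrict the possible spans. A secondary obstacle is extracting unconditional $p$-Hamiltonicity from the vertex count alone, which for some pairs $(k,m)$ currently depends on unproved portions of the Sparla conjecture and may have to be imposed as a hypothesis.
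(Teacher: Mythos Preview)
The statement you are addressing is stated in the paper as a \emph{conjecture}, not a theorem; the paper provides no proof of it. So there is nothing to compare your attempt against, and your task description (``prove the conjecture'') was ill-posed from the start. What you have written is a reasonable proof \emph{outline} that openly flags its own gaps, but those gaps are precisely the reasons the paper leaves the statement open.

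Concretely: the paper explicitly identifies the obstruction you call ``the principal obstacle''. In Section~\ref{sec:Kkd} it says that ``there seems to be no direct way of generalizing Theorem~\ref{thm:Kd2NeighborlyTight} to also hold for members of $\mathcal{K}^k(d)$ \ldots\ The key obstruction here is the fact that a generalization of Lemma~\ref{lem:StackedSphereHomology} is impossible. While in the case of ordinary stacked spheres a bistellar $0$-move does not introduce inner simplices to the $(d-1)$-skeleton, the key argument in Lemma~\ref{lem:StackedSphereHomology}, this is not true for bistellar $i$-moves for $i\geq 1$.'' Your proposed $p$-stacked analogue of Lemma~\ref{lem:StackedSphereHomology} is therefore not a missing lemma but the heart of the open problem; your inductive scheme (peeling off diagonal pairs of $\beta^{d+1}$) does not circumvent the difficulty because each such step is itself a bistellar move of positive index and so changes the span of vertex subsets in the very dimension range you need to control. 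Your secondary obstacle is also genuine: extracting $p$-Hamiltonicity from the vertex count alone relies on unproved cases of Sparla's conjecture, and the paper's evidence for the $\mathcal{K}^{\min\{k,m\}}(k+m)$ claim (Theorem~\ref{thm:centrsymmseries}) is computer verification on a finite list of examples, not a conceptual argument of the type you sketch.

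In short, your plan correctly isolates the two ingredients one would want, but neither ingredient is currently available, which is exactly why the statement remains a conjecture.
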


\section*{Acknowledgment}	
The author acknowledges support by the Deutsche Forschungsgemeinschaft (DFG). This work was carried out as part of the DFG project Ku 1203/5-2. 

\begin{table}
\caption{The 240 $5$-simplices of $M^6_{16}$.}
\label{tab:S2S4CentSymmSimplices}
\centering
{\tiny
\begin{tabular}{lllll}
$\langle 1\,2\,3\,4\,7\,12\,14 \rangle$, &
$\langle 1\,2\,3\,4\,7\,12\,16 \rangle$, &
$\langle 1\,2\,3\,4\,7\,13\,14 \rangle$, &
$\langle 1\,2\,3\,4\,7\,13\,16 \rangle$, &
$\langle 1\,2\,3\,4\,9\,12\,14 \rangle$,\\ 

$\langle 1\,2\,3\,4\,9\,12\,16 \rangle$, &
$\langle 1\,2\,3\,4\,9\,14\,16 \rangle$, &
$\langle 1\,2\,3\,4\,13\,14\,16 \rangle$, &
$\langle 1\,2\,3\,6\,7\,12\,14 \rangle$, &
$\langle 1\,2\,3\,6\,7\,12\,16 \rangle$,\\ 

$\langle 1\,2\,3\,6\,7\,13\,14 \rangle$, &
$\langle 1\,2\,3\,6\,7\,13\,16 \rangle$, &
$\langle 1\,2\,3\,6\,9\,10\,12 \rangle$, &
$\langle 1\,2\,3\,6\,9\,10\,13 \rangle$, &
$\langle 1\,2\,3\,6\,9\,12\,16 \rangle$,\\ 

$\langle 1\,2\,3\,6\,9\,13\,16 \rangle$, &
$\langle 1\,2\,3\,6\,10\,11\,12 \rangle$, &
$\langle 1\,2\,3\,6\,10\,11\,13 \rangle$, &
$\langle 1\,2\,3\,6\,11\,12\,14 \rangle$, &
$\langle 1\,2\,3\,6\,11\,13\,14 \rangle$,\\ 

$\langle 1\,2\,3\,9\,10\,11\,12 \rangle$, &
$\langle 1\,2\,3\,9\,10\,11\,13 \rangle$, &
$\langle 1\,2\,3\,9\,11\,12\,14 \rangle$, &
$\langle 1\,2\,3\,9\,11\,13\,14 \rangle$, &
$\langle 1\,2\,3\,9\,13\,14\,16 \rangle$,\\ 

$\langle 1\,2\,4\,7\,12\,14\,15 \rangle$, &
$\langle 1\,2\,4\,7\,12\,15\,16 \rangle$, &
$\langle 1\,2\,4\,7\,13\,14\,15 \rangle$, &
$\langle 1\,2\,4\,7\,13\,15\,16 \rangle$, &
$\langle 1\,2\,4\,9\,12\,14\,16 \rangle$,\\ 

$\langle 1\,2\,4\,12\,14\,15\,16 \rangle$, &
$\langle 1\,2\,4\,13\,14\,15\,16 \rangle$, &
$\langle 1\,2\,6\,7\,12\,14\,16 \rangle$, &
$\langle 1\,2\,6\,7\,13\,14\,15 \rangle$, &
$\langle 1\,2\,6\,7\,13\,15\,16 \rangle$,\\ 

$\langle 1\,2\,6\,7\,14\,15\,16 \rangle$, &
$\langle 1\,2\,6\,9\,10\,11\,12 \rangle$, &
$\langle 1\,2\,6\,9\,10\,11\,13 \rangle$, &
$\langle 1\,2\,6\,9\,11\,12\,14 \rangle$, &
$\langle 1\,2\,6\,9\,11\,13\,15 \rangle$,\\ 

$\langle 1\,2\,6\,9\,11\,14\,15 \rangle$, &
$\langle 1\,2\,6\,9\,12\,14\,16 \rangle$, &
$\langle 1\,2\,6\,9\,13\,15\,16 \rangle$, &
$\langle 1\,2\,6\,9\,14\,15\,16 \rangle$, &
$\langle 1\,2\,6\,11\,13\,14\,15 \rangle$,\\ 

$\langle 1\,2\,7\,12\,14\,15\,16 \rangle$, &
$\langle 1\,2\,9\,11\,13\,14\,15 \rangle$, &
$\langle 1\,2\,9\,13\,14\,15\,16 \rangle$, &
$\langle 1\,3\,4\,7\,12\,14\,16 \rangle$, &
$\langle 1\,3\,4\,7\,13\,14\,16 \rangle$,\\ 

$\langle 1\,3\,4\,9\,12\,14\,16 \rangle$, &
$\langle 1\,3\,6\,7\,12\,14\,16 \rangle$, &
$\langle 1\,3\,6\,7\,13\,14\,16 \rangle$, &
$\langle 1\,3\,6\,8\,9\,10\,11 \rangle$, &
$\langle 1\,3\,6\,8\,9\,10\,13 \rangle$,\\ 

$\langle 1\,3\,6\,8\,9\,11\,14 \rangle$, &
$\langle 1\,3\,6\,8\,9\,13\,14 \rangle$, &
$\langle 1\,3\,6\,8\,10\,11\,13 \rangle$, &
$\langle 1\,3\,6\,8\,11\,13\,14 \rangle$, &
$\langle 1\,3\,6\,9\,10\,11\,12 \rangle$,\\ 

$\langle 1\,3\,6\,9\,11\,12\,14 \rangle$, &
$\langle 1\,3\,6\,9\,12\,14\,16 \rangle$, &
$\langle 1\,3\,6\,9\,13\,14\,16 \rangle$, &
$\langle 1\,3\,8\,9\,10\,11\,13 \rangle$, &
$\langle 1\,3\,8\,9\,11\,13\,14 \rangle$,\\ 

$\langle 1\,4\,7\,8\,10\,11\,13 \rangle$, &
$\langle 1\,4\,7\,8\,10\,11\,15 \rangle$, &
$\langle 1\,4\,7\,8\,10\,13\,16 \rangle$, &
$\langle 1\,4\,7\,8\,10\,15\,16 \rangle$, &
$\langle 1\,4\,7\,8\,11\,13\,15 \rangle$,\\ 

$\langle 1\,4\,7\,8\,12\,14\,15 \rangle$, &
$\langle 1\,4\,7\,8\,12\,14\,16 \rangle$, &
$\langle 1\,4\,7\,8\,12\,15\,16 \rangle$, &
$\langle 1\,4\,7\,8\,13\,14\,15 \rangle$, &
$\langle 1\,4\,7\,8\,13\,14\,16 \rangle$,\\ 

$\langle 1\,4\,7\,10\,11\,13\,15 \rangle$, &
$\langle 1\,4\,7\,10\,13\,15\,16 \rangle$, &
$\langle 1\,4\,8\,10\,11\,13\,15 \rangle$, &
$\langle 1\,4\,8\,10\,13\,15\,16 \rangle$, &
$\langle 1\,4\,8\,12\,14\,15\,16 \rangle$,\\ 

$\langle 1\,4\,8\,13\,14\,15\,16 \rangle$, &
$\langle 1\,6\,7\,8\,10\,11\,13 \rangle$, &
$\langle 1\,6\,7\,8\,10\,11\,15 \rangle$, &
$\langle 1\,6\,7\,8\,10\,13\,16 \rangle$, &
$\langle 1\,6\,7\,8\,10\,15\,16 \rangle$,\\ 

$\langle 1\,6\,7\,8\,11\,13\,15 \rangle$, &
$\langle 1\,6\,7\,8\,13\,14\,15 \rangle$, &
$\langle 1\,6\,7\,8\,13\,14\,16 \rangle$, &
$\langle 1\,6\,7\,8\,14\,15\,16 \rangle$, &
$\langle 1\,6\,7\,10\,11\,13\,15 \rangle$,\\ 

$\langle 1\,6\,7\,10\,13\,15\,16 \rangle$, &
$\langle 1\,6\,8\,9\,10\,11\,15 \rangle$, &
$\langle 1\,6\,8\,9\,10\,13\,16 \rangle$, &
$\langle 1\,6\,8\,9\,10\,15\,16 \rangle$, &
$\langle 1\,6\,8\,9\,11\,14\,15 \rangle$,\\ 

$\langle 1\,6\,8\,9\,13\,14\,16 \rangle$, &
$\langle 1\,6\,8\,9\,14\,15\,16 \rangle$, &
$\langle 1\,6\,8\,11\,13\,14\,15 \rangle$, &
$\langle 1\,6\,9\,10\,11\,13\,15 \rangle$, &
$\langle 1\,6\,9\,10\,13\,15\,16 \rangle$,\\ 

$\langle 1\,7\,8\,12\,14\,15\,16 \rangle$, &
$\langle 1\,8\,9\,10\,11\,13\,15 \rangle$, &
$\langle 1\,8\,9\,10\,13\,15\,16 \rangle$, &
$\langle 1\,8\,9\,11\,13\,14\,15 \rangle$, &
$\langle 1\,8\,9\,13\,14\,15\,16 \rangle$,\\ 

$\langle 2\,3\,4\,5\,7\,10\,11 \rangle$, &
$\langle 2\,3\,4\,5\,7\,10\,16 \rangle$, &
$\langle 2\,3\,4\,5\,7\,11\,14 \rangle$, &
$\langle 2\,3\,4\,5\,7\,14\,16 \rangle$, &
$\langle 2\,3\,4\,5\,9\,10\,11 \rangle$,\\ 

$\langle 2\,3\,4\,5\,9\,10\,12 \rangle$, &
$\langle 2\,3\,4\,5\,9\,11\,14 \rangle$, &
$\langle 2\,3\,4\,5\,9\,12\,16 \rangle$, &
$\langle 2\,3\,4\,5\,9\,14\,16 \rangle$, &
$\langle 2\,3\,4\,5\,10\,12\,16 \rangle$,\\ 

$\langle 2\,3\,4\,7\,10\,11\,12 \rangle$, &
$\langle 2\,3\,4\,7\,10\,12\,16 \rangle$, &
$\langle 2\,3\,4\,7\,11\,12\,14 \rangle$, &
$\langle 2\,3\,4\,7\,13\,14\,16 \rangle$, &
$\langle 2\,3\,4\,9\,10\,11\,12 \rangle$,\\ 

$\langle 2\,3\,4\,9\,11\,12\,14 \rangle$, &
$\langle 2\,3\,5\,6\,9\,10\,12 \rangle$, &
$\langle 2\,3\,5\,6\,9\,10\,13 \rangle$, &
$\langle 2\,3\,5\,6\,9\,11\,13 \rangle$, &
$\langle 2\,3\,5\,6\,9\,11\,14 \rangle$,\\ 

$\langle 2\,3\,5\,6\,9\,12\,16 \rangle$, &
$\langle 2\,3\,5\,6\,9\,14\,16 \rangle$, &
$\langle 2\,3\,5\,6\,10\,11\,12 \rangle$, &
$\langle 2\,3\,5\,6\,10\,11\,13 \rangle$, &
$\langle 2\,3\,5\,6\,11\,12\,14 \rangle$,\\ 

$\langle 2\,3\,5\,6\,12\,14\,16 \rangle$, &
$\langle 2\,3\,5\,7\,10\,11\,12 \rangle$, &
$\langle 2\,3\,5\,7\,10\,12\,16 \rangle$, &
$\langle 2\,3\,5\,7\,11\,12\,14 \rangle$, &
$\langle 2\,3\,5\,7\,12\,14\,16 \rangle$,\\ 

$\langle 2\,3\,5\,9\,10\,11\,13 \rangle$, &
$\langle 2\,3\,6\,7\,12\,14\,16 \rangle$, &
$\langle 2\,3\,6\,7\,13\,14\,16 \rangle$, &
$\langle 2\,3\,6\,9\,11\,13\,14 \rangle$, &
$\langle 2\,3\,6\,9\,13\,14\,16 \rangle$,\\ 

$\langle 2\,4\,5\,7\,10\,11\,12 \rangle$, &
$\langle 2\,4\,5\,7\,10\,12\,15 \rangle$, &
$\langle 2\,4\,5\,7\,10\,15\,16 \rangle$, &
$\langle 2\,4\,5\,7\,11\,12\,14 \rangle$, &
$\langle 2\,4\,5\,7\,12\,14\,15 \rangle$,\\ 

$\langle 2\,4\,5\,7\,14\,15\,16 \rangle$, &
$\langle 2\,4\,5\,9\,10\,11\,12 \rangle$, &
$\langle 2\,4\,5\,9\,11\,12\,14 \rangle$, &
$\langle 2\,4\,5\,9\,12\,14\,16 \rangle$, &
$\langle 2\,4\,5\,10\,12\,15\,16 \rangle$,\\ 

$\langle 2\,4\,5\,12\,14\,15\,16 \rangle$, &
$\langle 2\,4\,7\,10\,12\,15\,16 \rangle$, &
$\langle 2\,4\,7\,13\,14\,15\,16 \rangle$, &
$\langle 2\,5\,6\,9\,10\,11\,12 \rangle$, &
$\langle 2\,5\,6\,9\,10\,11\,13 \rangle$,\\ 

$\langle 2\,5\,6\,9\,11\,12\,14 \rangle$, &
$\langle 2\,5\,6\,9\,12\,14\,16 \rangle$, &
$\langle 2\,5\,7\,10\,12\,15\,16 \rangle$, &
$\langle 2\,5\,7\,12\,14\,15\,16 \rangle$, &
$\langle 2\,6\,7\,13\,14\,15\,16 \rangle$,\\ 

$\langle 2\,6\,9\,11\,13\,14\,15 \rangle$, &
$\langle 2\,6\,9\,13\,14\,15\,16 \rangle$, &
$\langle 3\,4\,5\,7\,8\,10\,11 \rangle$, &
$\langle 3\,4\,5\,7\,8\,10\,16 \rangle$, &
$\langle 3\,4\,5\,7\,8\,11\,12 \rangle$,\\ 

$\langle 3\,4\,5\,7\,8\,12\,16 \rangle$, &
$\langle 3\,4\,5\,7\,11\,12\,14 \rangle$, &
$\langle 3\,4\,5\,7\,12\,14\,16 \rangle$, &
$\langle 3\,4\,5\,8\,9\,10\,11 \rangle$, &
$\langle 3\,4\,5\,8\,9\,10\,12 \rangle$,\\ 

$\langle 3\,4\,5\,8\,9\,11\,12 \rangle$, &
$\langle 3\,4\,5\,8\,10\,12\,16 \rangle$, &
$\langle 3\,4\,5\,9\,11\,12\,14 \rangle$, &
$\langle 3\,4\,5\,9\,12\,14\,16 \rangle$, &
$\langle 3\,4\,7\,8\,10\,11\,12 \rangle$,\\ 

$\langle 3\,4\,7\,8\,10\,12\,16 \rangle$, &
$\langle 3\,4\,8\,9\,10\,11\,12 \rangle$, &
$\langle 3\,5\,6\,8\,9\,10\,12 \rangle$, &
$\langle 3\,5\,6\,8\,9\,10\,13 \rangle$, &
$\langle 3\,5\,6\,8\,9\,11\,12 \rangle$,\\ 

$\langle 3\,5\,6\,8\,9\,11\,13 \rangle$, &
$\langle 3\,5\,6\,8\,10\,11\,12 \rangle$, &
$\langle 3\,5\,6\,8\,10\,11\,13 \rangle$, &
$\langle 3\,5\,6\,9\,11\,12\,14 \rangle$, &
$\langle 3\,5\,6\,9\,12\,14\,16 \rangle$,\\ 

$\langle 3\,5\,7\,8\,10\,11\,12 \rangle$, &
$\langle 3\,5\,7\,8\,10\,12\,16 \rangle$, &
$\langle 3\,5\,8\,9\,10\,11\,13 \rangle$, &
$\langle 3\,6\,8\,9\,10\,11\,12 \rangle$, &
$\langle 3\,6\,8\,9\,11\,13\,14 \rangle$,\\ 

$\langle 4\,5\,7\,8\,10\,11\,13 \rangle$, &
$\langle 4\,5\,7\,8\,10\,13\,16 \rangle$, &
$\langle 4\,5\,7\,8\,11\,12\,15 \rangle$, &
$\langle 4\,5\,7\,8\,11\,13\,15 \rangle$, &
$\langle 4\,5\,7\,8\,12\,14\,15 \rangle$,\\ 

$\langle 4\,5\,7\,8\,12\,14\,16 \rangle$, &
$\langle 4\,5\,7\,8\,13\,14\,15 \rangle$, &
$\langle 4\,5\,7\,8\,13\,14\,16 \rangle$, &
$\langle 4\,5\,7\,10\,11\,12\,15 \rangle$, &
$\langle 4\,5\,7\,10\,11\,13\,15 \rangle$,\\ 

$\langle 4\,5\,7\,10\,13\,15\,16 \rangle$, &
$\langle 4\,5\,7\,13\,14\,15\,16 \rangle$, &
$\langle 4\,5\,8\,9\,10\,11\,13 \rangle$, &
$\langle 4\,5\,8\,9\,10\,12\,15 \rangle$, &
$\langle 4\,5\,8\,9\,10\,13\,15 \rangle$,\\ 

$\langle 4\,5\,8\,9\,11\,12\,15 \rangle$, &
$\langle 4\,5\,8\,9\,11\,13\,15 \rangle$, &
$\langle 4\,5\,8\,10\,12\,15\,16 \rangle$, &
$\langle 4\,5\,8\,10\,13\,15\,16 \rangle$, &
$\langle 4\,5\,8\,12\,14\,15\,16 \rangle$,\\ 

$\langle 4\,5\,8\,13\,14\,15\,16 \rangle$, &
$\langle 4\,5\,9\,10\,11\,12\,15 \rangle$, &
$\langle 4\,5\,9\,10\,11\,13\,15 \rangle$, &
$\langle 4\,7\,8\,10\,11\,12\,15 \rangle$, &
$\langle 4\,7\,8\,10\,12\,15\,16 \rangle$,\\ 

$\langle 4\,8\,9\,10\,11\,12\,15 \rangle$, &
$\langle 4\,8\,9\,10\,11\,13\,15 \rangle$, &
$\langle 5\,6\,7\,8\,10\,11\,13 \rangle$, &
$\langle 5\,6\,7\,8\,10\,11\,15 \rangle$, &
$\langle 5\,6\,7\,8\,10\,13\,15 \rangle$,\\ 

$\langle 5\,6\,7\,8\,11\,13\,15 \rangle$, &
$\langle 5\,6\,7\,10\,11\,13\,15 \rangle$, &
$\langle 5\,6\,8\,9\,10\,12\,15 \rangle$, &
$\langle 5\,6\,8\,9\,10\,13\,15 \rangle$, &
$\langle 5\,6\,8\,9\,11\,12\,15 \rangle$,\\ 

$\langle 5\,6\,8\,9\,11\,13\,15 \rangle$, &
$\langle 5\,6\,8\,10\,11\,12\,15 \rangle$, &
$\langle 5\,6\,9\,10\,11\,12\,15 \rangle$, &
$\langle 5\,6\,9\,10\,11\,13\,15 \rangle$, &
$\langle 5\,7\,8\,10\,11\,12\,15 \rangle$,\\ 

$\langle 5\,7\,8\,10\,12\,15\,16 \rangle$, &
$\langle 5\,7\,8\,10\,13\,15\,16 \rangle$, &
$\langle 5\,7\,8\,12\,14\,15\,16 \rangle$, &
$\langle 5\,7\,8\,13\,14\,15\,16 \rangle$, &
$\langle 6\,7\,8\,10\,13\,15\,16 \rangle$,\\ 

$\langle 6\,7\,8\,13\,14\,15\,16 \rangle$, &
$\langle 6\,8\,9\,10\,11\,12\,15 \rangle$, &
$\langle 6\,8\,9\,10\,13\,15\,16 \rangle$, &
$\langle 6\,8\,9\,11\,13\,14\,15 \rangle$, &
$\langle 6\,8\,9\,13\,14\,15\,16 \rangle$.
\end{tabular}} \end{table}

\bibliographystyle{plain}
\footnotesize
\bibliography{/home/effenbfx/bib/bibliography}
\end{document}